\theoremstyle{plain}
\newtheorem{theorem}{Theorem}
\newtheorem{proposition}{Proposition}[section]
\newtheorem{lemma}[proposition]{Lemma}
\theoremstyle{definition}
\newtheorem{remark}[proposition]{Remark}
\def\TA{\mathbin{\ast}}
\def\TP#1{{\vphantom{#1}}^{\mathit{t}}{#1}}
\def\TW#1{t_{#1}}
\title{On the self-intersection number of the nonsingular models of rational cuspidal plane curves}
\author{Keita Tono}
\date{}
\begin{document}
\maketitle
\def\thefootnote{}
\footnotetext{\textit{2010 Mathematics Subject Classification.} 14H50}
\footnotetext{\textit{Key words and phrases.} plane curve, cusp, self-intersection number.}
\footnotetext{Supported by JSPS Grant-in-Aid for Scientific Research (22540040) and the Fuujukai Foundation.}
\def\thefootnote{1}
\begin{abstract}
In this paper,
we consider rational cuspidal plane curves
having at least three cusps.
We give an upper bound of
the self-intersection number of
the proper transforms of such curves
via the minimal embedded resolution of the cusps.
For a curve having exactly three cusps,
we show that the self-intersection number is equal to the bound
if and only if
the curve coincides with the quartic curve having three cusps.
\end{abstract}
\section{Introduction}
Let $C$ be an algebraic curve on $\mathbf{P}^2=\mathbf{P}^2(\mathbf{C})$.
A singular point of $C$ is said to be a \emph{cusp}
if it is a locally irreducible singular point.
We say that $C$ is \emph{cuspidal}
if $C$ has only cusps as its singular points.
Suppose that $C$ is rational and cuspidal.
Let $C'$ be the proper transform of $C$
via the minimal embedded resolution of the cusps.
Let $(C')^2$ denote its self-intersection number.
For instance,
$(C')^2=d$ if $C$ is the rational cuspidal plane curve
defined by the equation
$x^d=y^{d-1}z$,
where $d>2$ and $(x,y,z)$ are homogeneous coordinates of $\mathbf{P}^2$.
We estimate $(C')^2$ in the following way.

\begin{theorem}\label{thm1}
Let $C$ be a rational cuspidal plane curve with $n$ cusps.
If $n\ge 3$, then $(C')^2\le 7-3n$.
\end{theorem}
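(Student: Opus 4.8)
The plan is to work on the minimal embedded resolution $\sigma : V \to \mathbf{P}^2$ of the cusps of $C$. Let $D = \sigma^{-1}(C)$ be the total transform and $C'$ the proper transform, so $D = C' + \sum E_i$ where the $E_i$ are the exceptional curves. Since $C$ is rational and cuspidal, $C'$ is a smooth rational curve, and the key numerical data are: the number of blow-ups $N$ needed (so that $K_V = \sigma^* K_{\mathbf{P}^2} + \sum (\text{multiplicities}) E_i$ and $(C')^2 = d^2 - \sum (\text{blow-up multiplicities})^2$ where $d = \deg C$), together with the genus formula $p_a(C') = 0$. I would first record the two fundamental identities obtained by pulling back $C \sim dH$: the self-intersection drop $(C')^2 = d^2 - \sum_{j} m_j^2$ and the genus/adjunction relation, which after using $p_a(C)=\binom{d-1}{2}$ and summing the local cuspidal contributions $\delta_p = \sum \binom{m_{p,j}}{2}$ gives $\sum_p \delta_p = \binom{d-1}{2}$. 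Combining these yields $(C')^2 = 3d - 2 - \#\{\text{exceptional curves}\} - (\text{correction from }K_V\!\cdot\!C')$; more cleanly, $(C')^2 + K_V \cdot C' = -2$ since $C'$ is smooth rational, so everything reduces to controlling $K_V \cdot C' = 3d - 3 - \sum_p(\text{length of the cusp})$-type quantities.

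**The main estimate.** The crux is a lower bound on the resolution complexity of each cusp in terms of how much it forces $(C')^2$ down, and then summing over the $n \ge 3$ cusps. For a single cusp $p$ with multiplicity sequence $(m_1, m_2, \dots)$, the contribution to $d^2 - (C')^2$ is $\sum m_j^2$ while the contribution to $\delta_p$ (hence to $\binom{d-1}{2}$) is $\sum \binom{m_j}{2} = \frac{1}{2}(\sum m_j^2 - \sum m_j)$. The inequality $\sum m_j^2 \ge \sum m_j + \text{(something)}$, combined with the Bézout-type constraint that the multiplicities at the cusps satisfy $\sum_p m_{p,1} \le d$ if the cusps are not collinear in a strong sense — actually more robustly, that any line meets $C$ in $d$ points so $m_{p,1} + m_{q,1} \le d$ for $p \ne q$, and the genus bound — should pin down $d$ in terms of $n$ and then give the bound $7 - 3n$. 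I expect the clean way is: show $(C')^2 \le d + 1 - n$ type intermediate bound from the per-cusp analysis, then show $d \le 6 - 2n$ or similar from combining Bézout with $n \ge 3$, so that $(C')^2 \le (6-2n) + 1 - n = 7 - 3n$. The numerology $7 - 3n$ with equality at the three-cuspidal quartic ($d = 4$, $n = 3$, $(C')^2 = -2$) suggests exactly this split: $d \le 6 - 2n$ fails, so more likely one bounds $(C')^2$ directly against $3d - 2 - (\text{number of infinitely near points})$ and uses that each cusp contributes at least some fixed number of infinitely near points.

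**The main obstacle.** The hard part will be establishing the per-cusp lower bound on $\sum m_j^2$ relative to $\sum m_j$ in a form that survives summation and interacts correctly with the global degree constraint — i.e., proving that the "efficiency" with which a cusp can keep $(C')^2$ large is limited, uniformly, and that having three or more cusps rather than one or two costs a definite amount ($-3$ per extra cusp in the bound). A naive per-cusp bound is not enough because a single cusp can be arbitrarily complicated; the savings must come from the global interaction (Bézout between distinct cusps, plus the rationality forcing the total $\delta = \binom{d-1}{2}$). I would handle this by a case analysis on the first multiplicities $m_{p,1}$: if some cusp has $m_{p,1} \ge d-1$ the curve is essentially $x^d = y^{d-1}z$ and has one cusp, contradicting $n \ge 3$; if all $m_{p,1} \le d-2$, then the genus bound $\sum_p \binom{m_{p,1}}{2} \le \binom{d-1}{2}$ together with $n \ge 3$ forces $d$ small, and one finishes by direct inspection of the finitely many numerical possibilities, checking $(C')^2 \le 7 - 3n$ in each. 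I would also expect to need the elementary fact that for a minimal embedded resolution no $(-1)$-curve in $D$ meets the rest of $D$ in fewer than three points, which controls the combinatorics of the exceptional configuration and rules out inefficient configurations.
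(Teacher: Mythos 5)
There is a genuine gap: your proposal never establishes the one inequality that actually drives the bound, and the route you sketch toward it cannot work. The paper's proof has two parts. The elementary part is bookkeeping you essentially have: writing $D=\sigma^{-1}(C)$ and letting $\omega_k,\eta_k$ be the numbers of subdivisional and sprouting blow-ups over the cusp $P_k$, one gets $K(K+D)=7-D^2-\sum_k(\omega_k+\eta_k)$ and (Matsuoka--Sakai) $D^2=(C')^2-\sum_k(\omega_k-2)$, hence $K(K+D)=7-2n-(C')^2-\sum_k\eta_k$, and $\eta_k\ge1$ because the second blow-up over each cusp is sprouting. The decisive part is the inequality $K(K+D)\ge 0$, which is \emph{not} a consequence of B\'ezout, adjunction, or multiplicity-sequence counting: the paper obtains it by viewing $V\setminus D$ as a $\mathbf{Q}$-homology plane with $\overline{\kappa}=2$ (Wakabayashi, since $n\ge3$), proving via Zariski decomposition and bark theory that the negative part of $K+D$ equals $\operatorname{Bk}(D)$ (using Miyanishi--Tsunoda's absence of topologically contractible curves), and then applying the Miyaoka--Sakai vanishing theorem and Riemann--Roch to get $h^0(V,2K+D)=K(K+D)\ge0$. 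Your proposal contains no substitute for this positivity statement.

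Moreover, the specific elementary path you propose would fail. You suggest that if all $m_{p,1}\le d-2$ then the genus constraint $\sum_p\binom{m_{p,1}}{2}\le\binom{d-1}{2}$ together with $n\ge3$ ``forces $d$ small,'' reducing to finitely many numerical cases. That is false: the paper's introduction cites series of rational cuspidal curves with exactly three cusps in every degree $d\ge4$ (Flenner--Zaidenberg, Fenske), with $(C')^2$ taking values such as $2-d$, $-k-2$, $-k-3$; so the degree is unbounded and there is no finite case check. You also concede yourself that the intermediate split ``$d\le 6-2n$'' fails. The bound $7-3n$ genuinely measures the interaction of the global log-general-type geometry with the resolution data (via $K(K+D)\ge0$ and one sprouting blow-up per cusp), not a numerical constraint on $d$; without that input the argument does not close.
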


\begin{remark}\label{rmk}
It was proved in \cite[Theorem 3.5]{hartshorne:si} that
if $\Gamma$ is a smooth curve of genus $g\ge1$
on a smooth rational projective surface $S$ ($S\ne\mathrm{P}^2$ if $g=1$),
then $\Gamma^2\le4g+4$.
From this fact we infer that
if $C$ is a cuspidal plane curve of genus $g\ge1$ with $n$ cusps
then $(C')^2\le 4g+4-2n$.
It was shown in \cite{sst} that
for given integers $g,n$ with $g\ge0$, $1\le n\le2g+2$
there exist
sequences of cuspidal plane curves $C$ of genus $g$ with $n$ cusps
such that $(C')^2=4g+4-2n$.
\end{remark}

Let $C$ be a rational cuspidal plane curve with $n$ cusps.
It was proved in \cite{tono:nc} that $n\le8$.
We denote by $\overline{\kappa}=\overline{\kappa}(\mathbf{P}^2\setminus C)$
the logarithmic Kodaira dimension of the complement of $C$.
By \cite{wakabayashi}, we see
$\overline{\kappa}=2$ if $n\ge 3$.
Moreover, if $n=2$, then $\overline{\kappa}\ge 0$.
If $n=1$, then it was proved in \cite{yoshihara} that
$\overline{\kappa}=-\infty$ if and only if $(C')^2>-2$.
If $n=2$, then $(C')^2\le 0$ by \cite{tono:bck2}.
From these facts and Theorem~\ref{thm1},
$(C')^2$ is bounded from above if $\overline{\kappa}\ne-\infty$.

There are no known examples of
rational cuspidal plane curves
having more than $4$ cusps.
There is only one known
rational cuspidal plane curve  $C$ with $4$ cusps.
The curve $C$ is a quintic curve with $(C')^2=-7$
(\cite[Theorem 2.3.10]{namba}).
In \cite{fz:dd2} (resp.\ \cite{fz:dd3}, \cite{fenske}),
a sequence of rational cuspidal plane curves $C$ of degree $d$
with three cusps was constructed,
where $d\ge4$ (resp.\ $d=2k+3$, $d=3k+4$, $k\ge1$).
They satisfy $(C')^2=2-d$
(resp.\ $(C')^2=-k-2$, $(C')^2=-k-3$).
The bound given by Theorem~\ref{thm1}
is the best possible one for the case in which $n=3$
as the quartic curve $C$ with three cusps satisfies $(C')^2=-2$.
Moreover, we prove the following:

\begin{theorem}\label{thm2}
Let $C$ be a rational cuspidal plane curve with three cusps.
Then $(C')^2=-2$ if and only if
$C$ coincides with the quartic curve having three cusps.
\end{theorem}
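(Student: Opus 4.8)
\textit{Proof proposal.} The plan is to deduce Theorem~\ref{thm2} from a close inspection of the equality case of Theorem~\ref{thm1}. Write $d=\deg C$, let $\sigma\colon V\to\mathbf{P}^{2}$ be the minimal embedded resolution of the three cusps, and for the $i$-th cusp ($i=1,2,3$) let $(m_{i,1},m_{i,2},\dots)$ be its multiplicity sequence, i.e.\ the multiplicities of the strict transforms of $C$ at the points blown up by $\sigma$ lying over that cusp. Each such blow-up lowers the self-intersection of the strict transform by the square of the multiplicity there, and, $C$ being rational, the genus formula gives $\sum_{i,j}m_{i,j}(m_{i,j}-1)=(d-1)(d-2)$; combining these,
\[
 (C')^{2}=d^{2}-\sum_{i,j}m_{i,j}^{2}=3d-2-\sum_{i,j}m_{i,j},
 \qquad
 \sum_{i,j}\binom{m_{i,j}}{2}=\binom{d-1}{2}.
\]
Hence $(C')^{2}=-2$ is equivalent to $\sum_{i,j}m_{i,j}=3d$ (equivalently, to $C'$ being a smooth rational curve with $K_{V}\cdot C'=0$). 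The ``if'' part of the theorem is the computation recalled in the introduction: the tricuspidal quartic has $d=4$, three ordinary cusps each with multiplicity sequence $(2,1,1)$, and $\sum_{i,j}m_{i,j}=12=3\cdot4$.

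For the converse, assume $(C')^{2}=-2$. The key claim is that each of the three cusps is then ordinary. Granting this, $\sum_{i,j}\binom{m_{i,j}}{2}=3$, so the genus formula forces $\binom{d-1}{2}=3$, i.e.\ $d=4$, and $C$ is a plane quartic with three cusps; its uniqueness up to projective equivalence (proved below) then identifies it with the tricuspidal quartic. Establishing the key claim is the step I expect to be the main obstacle. A first, elementary reduction: the minimal embedded resolution of any cusp requires at least three blow-ups, so each multiplicity sequence has length $\ge3$ with first term $\ge2$, whence $\sum_{j}m_{i,j}\ge4$, with equality exactly for an ordinary cusp; thus $3d\ge12$, i.e.\ $d\ge4$, and the claim amounts to excluding the case $d\ge5$. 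For this I would re-examine the proof of Theorem~\ref{thm1}, which bounds $(C')^{2}$ through a chain of (in)equalities built from the weighted dual graph of the boundary divisor $D=\sigma^{-1}(C)_{\mathrm{red}}$, the Zariski decomposition of $K_{V}+D$, and the logarithmic invariants of the $\mathbf{Q}$-homology plane $\mathbf{P}^{2}\setminus C$ (whose topological Euler number equals $e(\mathbf{P}^{2})-e(\mathbf{P}^{1})=1$, since the normalization $\mathbf{P}^{1}\to C$ is a homeomorphism). Each inequality in that chain carries a nonnegative defect, and I expect the defects to vanish simultaneously only when every cusp is ordinary, hence only when $d=4$; this is consistent with the known degree-$d$ tricuspidal curves of \cite{fz:dd2}, \cite{fz:dd3} and \cite{fenske}, which have $(C')^{2}<-2$ for $d\ge5$. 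Pinning down precisely which of these defects can vanish together is the technical core; a fallback, should this prove delicate, would be to invoke the classification of rational cuspidal plane curves with three cusps and check $(C')^{2}$ case by case.

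It remains to see that a plane quartic $C$ with three cusps is unique up to projective equivalence, which is classical. The arithmetic genus of a plane quartic is $\binom{3}{2}=3$, so all three cusps are ordinary and $C$ is otherwise smooth; by the Plücker formula its class is $4\cdot3-3\cdot3=3$, so the dual curve $C^{*}$ is a rational plane cubic, and since $C$ has three cusps, $C^{*}$ has three inflection points. A cuspidal cubic has only one inflection, so $C^{*}$ is a nodal cubic. All nodal cubics are projectively equivalent, and the passage to the dual is an involution intertwining the $\mathrm{PGL}_{3}(\mathbf{C})$-action on $\mathbf{P}^{2}$ with the contragredient action on $(\mathbf{P}^{2})^{*}$; hence every plane quartic with three cusps --- being the dual of a nodal cubic --- is projectively equivalent to the tricuspidal quartic. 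This completes the proof.
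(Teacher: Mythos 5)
Your ``if'' direction and the classical uniqueness of the tricuspidal quartic are fine, but the heart of the theorem --- the ``only if'' direction --- is not proved, and the strategy you sketch for it cannot work as stated. From the equality analysis of Theorem~\ref{thm1} one only gets the numerical conclusion of Proposition~\ref{prop:n}: with $n=3$ and $(C')^2=-2$ one has $0\le K(K+D)=3-\sum_k\eta_k$, hence $\eta_k=1$ (and $h_k=1$) for every cusp and $K(K+D)=0$; this is Lemma~\ref{lem:cr2}. But ``all defects vanish'' does \emph{not} force the cusps to be ordinary: for example, the numerical data of a degree $8$ curve with two cusps of type $x^5=y^6$ and one ordinary cusp satisfies every constraint you have at your disposal --- the genus formula ($10+10+1=\binom{7}{2}$), your identity $(C')^2=3d-2-\sum m_{i,j}=24-2-24=-2$, and $h_k=1$, $\eta_k=1$ for each cusp (for a cusp $x^p=y^{p+1}$ only the second blow-up is sprouting), so $K(K+D)=0$. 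Excluding such non-ordinary candidates is precisely the content of the theorem, and it requires genuinely new geometric input beyond the inequality chain of Theorem~\ref{thm1}. Your fallback, ``invoke the classification of rational cuspidal plane curves with three cusps,'' is not available: no such classification is known (only the series of \cite{fz:dd2}, \cite{fz:dd3}, \cite{fenske} are known examples), so it cannot be cited.

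The paper supplies the missing input as follows: after Lemma~\ref{lem:cr2}, contracting $D_0^{(2)}+D_0^{(3)}$ makes the image of $C'$ a smooth curve of self-intersection $0$, whence a $\mathbf{P}^1$-fibration $p$ with five $1$-sections $S_1,\dots,S_5$ inside $D$; mapping to a Hirzebruch surface and analyzing the possible singular fibers and the positions of the blown-up points (Lemmas~\ref{lem:s1}--\ref{lem:t}, using Lemma~\ref{lem:phi} and the fact that $\mathbf{P}^2\setminus C$ is a $\mathbf{Q}$-homology plane of general type containing no contractible curves), one finds that either $\varphi(S_4)\cap\varphi(S_5)$ consists of two points, in which case the exceptional graphs over the cusps are pinned down and $\deg C=4$, or it consists of one point, which leads to a contradiction with Lemma~\ref{lem:cr2} via $S_2^2\le-4$ versus $A^{(2)}_1=[3]$. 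None of this several-page fibration analysis is replaced by anything in your proposal, so the claimed reduction ``$(C')^2=-2$ $\Rightarrow$ all cusps ordinary $\Rightarrow$ $d=4$'' remains an unproved hope rather than a proof.
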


In \cite{tono:orevkov},
it was proved that
the rational cuspidal plane curves $C$ with
$n=1$, $\overline{\kappa}=2$ and $(C')^2=-2$
coincide with those constructed by Orevkov in \cite{orevkov}.
In \cite{tono:bck2},
the rational cuspidal plane curves $C$ with
$n=2$, $\overline{\kappa}=2$ and $(C')^2=-1$ were classified.
Theorem~\ref{thm2} is a similar result for the case in which $n=3$.
\section{Preliminaries}
In this section we prepare the proof of our theorems.
\subsection{Linear chains}\label{ssec:linear-chains}
Let $D$ be a divisor on a smooth projective surface $V$.
Let $D_1,\ldots,D_r$ be the irreducible components of $D$.
We call $D$ an \emph{SNC-divisor} if
$D$ is a reduced effective divisor,
each $D_i$ is smooth,
$D_iD_j\le 1$ for distinct $D_i,D_j$,
and $D_i\cap D_j\cap D_k=\varnothing$ for distinct $D_i,D_j,D_k$.
Assume that $D$ is an SNC-divisor.
We use the following notation and terminology
(cf.\ \cite[Section 3]{fujita} and \cite[Chapter 1]{mits}).
A blow-up at a point $P\in D$
is said to be \emph{sprouting} (resp.\ \emph{subdivisional})
\emph{with respect to} $D$
if $P$ is a smooth point (resp.\ node) of $D$.
We also use this terminology
for the case in which $D$ is a point.
By definition, the blow-up is subdivisional in this case.

Assume that each $D_i$ is rational.
Let $\Gamma=\Gamma(D)$ denote the dual graph of $D$.
We give the vertex corresponding to a component $D_i$
the weight $D_i^2$.
We sometimes do not distinguish between $D$
and its weighted dual graph $\Gamma$.
Assume that $\Gamma$ is connected and linear.
In case where $r>1$,
the weighted linear graph $\Gamma$ together with
a direction from an endpoint to the other
is called a \emph{linear chain}.
By definition,
the empty graph $\varnothing$
and a weighted graph consisting of a single vertex without edges
are linear chains.
If necessary, renumber $D_1,\ldots,D_r$ 
so that the direction of the linear chain $\Gamma$ is from $D_1$ to $D_r$
and $D_iD_{i+1}=1$ for $i=1,\ldots,r-1$.
We denote $\Gamma$ by $[-D_1^2,\ldots,-D_r^2]$.
We sometimes write $\Gamma$ as $[D_1,\ldots,D_r]$.
The linear chain $\Gamma$ is called \emph{admissible} 
if $\Gamma\ne\varnothing$ and $D_i^2\le-2$ for each $i$.
We define
the \emph{discriminant} $d(\Gamma)$ of $\Gamma$
as the determinant of the $r\times r$ matrix $(-D_i D_j)$.
We set $d(\varnothing)=1$.

Let $A=[a_1,\ldots,a_r]$ be a linear chain.
We use the following notation if $A\ne\varnothing$:
\[
\TP{A}:=[a_r,\ldots,a_1],\ 
\overline{A}:=[a_2,\ldots,a_r],\ 
\underline{A}:=[a_1,\ldots,a_{r-1}].
\]
The discriminant $d(A)$ has the following properties (\cite[Lemma 3.6]{fujita}).
\begin{lemma}\label{lem:det1}
Let $A=[a_1,\ldots,a_r]$ be a linear chain,
where $a_1,\ldots,a_r$ are integers.
\begin{enumerate}
\item
If $r>1$, then
$d(A)=a_1 d(\overline{A})-d(\overline{\overline{A}})=d(\TP{A})=a_r d(\underline{A})-d(\underline{\underline{A}})$.
\item
If $r>1$, then
$d(\overline{A})d(\underline{A})-d(A)d(\underline{\overline{A}})=1$.
\item
If $A$ is admissible,
then $\gcd(d(A),d(\overline{A}))=1$ and $d(A)>d(\overline{A})>0$.
\end{enumerate}
\end{lemma}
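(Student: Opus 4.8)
The plan is to argue directly from the definition of $d(A)$ as a determinant. Since adjacent components of a linear chain meet transversally and non-adjacent components are disjoint, the matrix $(-D_iD_j)$ is the $r\times r$ tridiagonal matrix $N(A)$ whose diagonal entries are $a_1,\ldots,a_r$ and whose sub- and super-diagonal entries are all $-1$. For (i) I would expand $\det N(A)$ by cofactors along the first row $(a_1,-1,0,\ldots,0)$: the $(1,1)$-cofactor is $d(\overline{A})$, and expanding the $(1,2)$-minor once more along its first column (whose only nonzero entry is a single $-1$ in the top position) shows that this minor equals $-d(\overline{\overline{A}})$, so that after collecting signs $d(A)=a_1d(\overline{A})-d(\overline{\overline{A}})$. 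Expanding along the last row instead gives $d(A)=a_rd(\underline{A})-d(\underline{\underline{A}})$. Finally $N(\TP{A})$ is obtained from $N(A)$ by conjugation with the permutation matrix that reverses the order of the indices, so $d(\TP{A})=d(A)$. The degenerate cases ($r\le 2$, and $A=\varnothing$ with the convention $d(\varnothing)=1$) are checked by hand.

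For (ii) I would induct on $r$, the case $r=2$ being the identity $a_2a_1-(a_1a_2-1)\cdot 1=1$. For the inductive step, apply the last-row formula of (i) to both $d(A)$ and $d(\overline{A})$ and substitute into $d(\overline{A})d(\underline{A})-d(A)d(\underline{\overline{A}})$; the terms containing $a_r$ cancel, leaving $d(\underline{\underline{A}})\,d(\underline{\overline{A}})-d(\underline{\underline{\overline{A}}})\,d(\underline{A})$, which is exactly the instance of (ii) for the shorter chain $\underline{A}$ and hence equals $1$. (Alternatively, (ii) is the Desnanot--Jacobi identity applied to $N(A)$, once one observes that its two corner minors are triangular with all diagonal entries $-1$, hence equal to $(-1)^{r-1}$.)

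For (iii), admissibility means $a_i\ge 2$ for every $i$. I would prove $d(A)>d(\overline{A})>0$ by induction on $r$ using the first formula of (i): assuming $d(\overline{A})>d(\overline{\overline{A}})>0$ for the admissible subchain $\overline{A}$, one gets $d(A)=a_1d(\overline{A})-d(\overline{\overline{A}})\ge 2d(\overline{A})-d(\overline{\overline{A}})>d(\overline{A})>0$, the base case $r=1$ being $d(A)=a_1\ge 2>1=d(\varnothing)$. Then $\gcd(d(A),d(\overline{A}))=1$ is immediate from (ii), which exhibits $1$ as a $\mathbf{Z}$-linear combination $d(\underline{A})\,d(\overline{A})-d(\underline{\overline{A}})\,d(A)$ of $d(\overline{A})$ and $d(A)$. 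I do not expect a genuine obstacle: the whole argument is elementary linear algebra plus two short inductions, and the only points needing care are the sign bookkeeping in the double cofactor expansion for (i) and the correct treatment of the degenerate chains, for which the conventions $d(\varnothing)=1$ and the hypotheses $r>1$ in (i) and (ii) are precisely what is required.
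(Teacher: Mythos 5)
Your proof is correct. Note, however, that the paper contains no argument of its own for this lemma: it is quoted verbatim from Fujita (\cite[Lemma 3.6]{fujita}), so there is no in-paper proof to match against. Your elementary route --- identifying $d(A)$ with the determinant of the tridiagonal matrix with diagonal $a_1,\ldots,a_r$ and off-diagonal entries $-1$, getting (i) by cofactor expansion along the first and last rows (plus invariance under reversal), (ii) by induction using the last-row recursion (or, as you observe, as a special case of the Desnanot--Jacobi identity, the corner minors being triangular with determinant $(-1)^{r-1}$), and (iii) by induction from $a_i\ge 2$ together with the B\'ezout relation furnished by (ii) --- is a sound, self-contained substitute for the citation; what the citation buys is brevity, what your argument buys is verifiability without consulting Fujita, whose own treatment develops these identities within his general theory of linear chains and inductances. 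Two small points of care, neither a gap: in (iii) the coprimality for $r=1$ is not covered by (ii) (which needs $r>1$) but is trivial since $d(\overline{A})=d(\varnothing)=1$; and in the induction for (ii) one should note that the last-row formula is applied to $\overline{A}$, which requires $r\ge 3$, exactly the cases not handled by your base case $r=2$.
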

%

Let $A$ be an admissible linear chain.
The rational number $e(A):=d(\overline{A})/d(A)$
is called the \emph{inductance} of $A$.
By \cite[Corollary 3.8]{fujita}, the function
$e$ defines a one-to-one correspondence between the
set of all the admissible linear chains and the set of rational numbers
in the interval $(0,1)$.
For a given admissible linear chain $A$,
the admissible linear chain $A^{\ast}:=e^{-1}(1-e(\TP{A}))$ is called
the \emph{adjoint} of $A$ (\cite[3.9]{fujita}).
Admissible linear chains and their adjoints have the following properties
(\cite[Corollary 3.7, Proposition 4.7]{fujita}).
\begin{lemma}\label{lem:indf}
Let $A$ and $B$ be admissible linear chains.
\begin{enumerate}
\item
If $e(A)+e(B)=1$, then $d(A)=d(B)$ and $e(\TP{A})+e(\TP{B})=1$.
\item
We have $A^{\ast\ast}=A$, $\TP{(A^{\ast})}=(\TP{A})^{\ast}$ and
$d(A)=d(A^{\ast})=d(\overline{A^{\ast}})+d(\underline{A})$.
\end{enumerate}
\end{lemma}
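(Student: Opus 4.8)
The plan is to reduce both parts to elementary arithmetic of the discriminants, using only Lemma~\ref{lem:det1} and the fact, recalled above from \cite[Corollary 3.8]{fujita}, that $e$ restricts to a bijection from the set of admissible linear chains onto $\mathbf{Q}\cap(0,1)$. The single observation I would record at the outset is the behaviour of the reversal: if $A=[a_1,\dots,a_r]$ is admissible then so is $\TP{A}$, one has $\overline{\TP{A}}=\TP{\underline{A}}$, and hence by Lemma~\ref{lem:det1}(i) $d(\TP{A})=d(A)$ and $d(\overline{\TP{A}})=d(\TP{\underline{A}})=d(\underline{A})$, so that $e(\TP{A})=d(\underline{A})/d(A)$. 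I would also record the congruence $d(\overline{A})\,d(\underline{A})\equiv1\pmod{d(A)}$ (this is Lemma~\ref{lem:det1}(ii) when $r>1$, and trivial when $r=1$) and the strict inequalities $0<d(\underline{A})<d(A)$, obtained by applying Lemma~\ref{lem:det1}(iii) to the admissible chain $\TP{A}$.

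For (i), assume $e(A)+e(B)=1$. By Lemma~\ref{lem:det1}(iii) the denominators $d(A),d(B)$ are positive and $e(A)=d(\overline{A})/d(A)$, $e(B)=d(\overline{B})/d(B)$ are in lowest terms; since $\gcd(d(A)-d(\overline{A}),d(A))=\gcd(d(\overline{A}),d(A))=1$, the identity $d(\overline{B})/d(B)=1-d(\overline{A})/d(A)=(d(A)-d(\overline{A}))/d(A)$ exhibits the right-hand side in lowest terms as well, whence $d(B)=d(A)$ and $d(\overline{B})=d(A)-d(\overline{A})$. It remains to prove $e(\TP{A})+e(\TP{B})=1$, i.e.\ $d(\underline{A})+d(\underline{B})=d(A)$. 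From the two congruences $d(\overline{A})\,d(\underline{A})\equiv1$ and $d(\overline{B})\,d(\underline{B})\equiv1\pmod{d(A)}$ together with $d(\overline{B})\equiv-d(\overline{A})$, one obtains $d(\overline{A})\,d(\underline{B})\equiv-1\equiv-d(\overline{A})\,d(\underline{A})\pmod{d(A)}$; as $d(\overline{A})$ is a unit modulo $d(A)$, this forces $d(\underline{A})+d(\underline{B})\equiv0\pmod{d(A)}$. Since $0<d(\underline{A}),d(\underline{B})<d(A)$, the positive integer $d(\underline{A})+d(\underline{B})$ is a multiple of $d(A)$ strictly between $0$ and $2d(A)$, hence equals $d(A)$.

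For (ii), I would argue purely formally from the definition $A^{\ast}=e^{-1}\bigl(1-e(\TP{A})\bigr)$. Since $\TP{A}$ is admissible, $e(\TP{A})\in(0,1)$, so $1-e(\TP{A})\in(0,1)$ and $A^{\ast}$ is a well-defined admissible chain with $e(A^{\ast})+e(\TP{A})=1$. Applying part (i) to the pair $(A^{\ast},\TP{A})$ yields $d(A^{\ast})=d(\TP{A})=d(A)$ and $e(\TP{(A^{\ast})})+e(\TP{(\TP{A})})=1$; as $\TP{(\TP{A})}=A$ this reads $e(\TP{(A^{\ast})})=1-e(A)$. Reading the latter through the definition of the adjoint, $\TP{(A^{\ast})}=e^{-1}\bigl(1-e(A)\bigr)=e^{-1}\bigl(1-e(\TP{(\TP{A})})\bigr)=(\TP{A})^{\ast}$, and then $A^{\ast\ast}=e^{-1}\bigl(1-e(\TP{(A^{\ast})})\bigr)=e^{-1}(e(A))=A$ by injectivity of $e$. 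Finally, multiplying $e(A^{\ast})+e(\TP{A})=1$ by $d(A)=d(A^{\ast})$ and using $e(\TP{A})=d(\underline{A})/d(A)$ gives $d(\overline{A^{\ast}})+d(\underline{A})=d(A)$.

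I do not expect a genuine obstacle here; the one step needing care is the passage from congruence to equality in (i), where one must check that every quantity being inverted modulo $d(A)$ is indeed a unit — all the relevant gcd's are $1$ by Lemma~\ref{lem:det1}(iii) — and that the bounds $0<d(\underline{A}),d(\underline{B})<d(A)$ are strict, so that their sum lands exactly on $d(A)$. The only other points to keep in view are the degenerate case $r=1$ (where $\overline{A}=\underline{A}=\varnothing$, $d(\overline{A})=d(\underline{A})=1$, and every formula is checked directly) and the well-definedness of $A^{\ast}$, both of which are immediate.
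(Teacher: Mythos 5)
Your proof is correct. One point of comparison to make first: the paper itself gives no proof of this lemma --- it is imported verbatim from Fujita (\cite[Corollary 3.7, Proposition 4.7]{fujita}), just as the bijectivity of $e$ is imported from \cite[Corollary 3.8]{fujita} --- so there is no internal argument to measure yours against; what you have written is a self-contained derivation from Lemma~\ref{lem:det1} plus that bijectivity, which is exactly the right set of ingredients given how the paper sets things up. The individual steps all check out: the reversal identities $d(\TP{A})=d(A)$, $\overline{\TP{A}}=\TP{\underline{A}}$, hence $e(\TP{A})=d(\underline{A})/d(A)$; the lowest-terms comparison giving $d(B)=d(A)$ and $d(\overline{B})=d(A)-d(\overline{A})$ (both fractions have positive numerator and denominator and are reduced by Lemma~\ref{lem:det1}(iii)); the cancellation of the unit $d(\overline{A})$ modulo $d(A)$; and the passage from $d(\underline{A})+d(\underline{B})\equiv0\pmod{d(A)}$ to equality via the strict bounds $0<d(\underline{A}),d(\underline{B})<d(A)$, which you correctly obtain by applying Lemma~\ref{lem:det1}(iii) to the transposed chains (and which is non-vacuous since $d(A)>d(\overline{A})>0$ forces $d(A)\ge2$ for any admissible chain). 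Your treatment of $r=1$, where $\overline{A}=\underline{A}=\varnothing$ and the congruence $d(\overline{A})d(\underline{A})\equiv1$ is trivial, closes the only degenerate case, and part (ii) is then pure bookkeeping with the bijection $e$, applied to the pair $(A^{\ast},\TP{A})$, exactly as it should be. The one thing worth keeping in mind is that your argument, like the paper's citation, rests on the unproved input that $e$ is a bijection onto $\mathbf{Q}\cap(0,1)$; Fujita's own proofs of these statements run through the continued-fraction description of admissible chains rather than the congruence trick you use, but modulo that common input your route is a legitimate and arguably more elementary alternative.
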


For integers $m$, $n$ with $n\ge 0$, we define 
$[m_n]=[\overbrace{m,\ldots,m}^n]$, $\TW{n}=[2_n]$.
For non-empty linear chains $A=[a_1,\ldots,a_r]$, $B=[b_1,\ldots,b_s]$,
we write
$A\TA B=[\underline{A},a_r+b_1-1,\overline{B}]$,
$A^{\ast n}=\overbrace{A\TA\cdots\TA A}^n$,
where $n\ge 1$ and
$a_1,\ldots,a_r,b_1,\ldots,b_s$ are integers.
We remark that $(A\TA B)\TA C=A\TA(B\TA C)$
for non-empty linear chains $A$, $B$ and $C$.
By using Lemma~\ref{lem:det1} and Lemma~\ref{lem:indf},
we can show the following lemma.
\begin{lemma}\label{lem:adj}
Let $A=[a_1+1,\ldots,a_r+1]$ be an admissible linear chain,
where $a_1,\ldots,a_r$ are positive integers.
\begin{enumerate}
\item
For a positive integer $n$, we have $[A,n+1]^{\ast}=\TW{n}\TA A^{\ast}$.
\item
We have $A^{\ast}=\TW{a_r}\TA\cdots\TA\TW{a_1}$.
\end{enumerate}
\end{lemma}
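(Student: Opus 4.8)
The plan is to deduce (ii) from (i) by a short induction on $r$, so the real content lies in (i). Throughout I use the tautological reformulation of the adjoint: since the inductance $e$ restricts to a bijection from the set of admissible linear chains onto the rationals in $(0,1)$, and $C^{\ast}$ is by definition the admissible chain with $e(C^{\ast})=1-e(\TP{C})$, an admissible chain $B$ equals $C^{\ast}$ if and only if $e(B)=1-e(\TP{C})$. So for (i) it suffices to check that $\TW{n}\TA A^{\ast}$ is admissible — immediate, since its entries are $2$'s together with those of $A^{\ast}$ with one increased by $1$, hence all $\ge2$ — and to verify the inductance identity $e(\TW{n}\TA A^{\ast})=1-e(\TP{[A,n+1]})$.

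For the right-hand side, $\TP{[A,n+1]}=[n+1,a_r+1,\dots,a_1+1]$ has $\overline{\TP{[A,n+1]}}=\TP{A}$, so Lemma~\ref{lem:det1}(i) gives $d(\TP{[A,n+1]})=d([A,n+1])=(n+1)d(A)-d(\underline{A})$ and $d(\overline{\TP{[A,n+1]}})=d(\TP{A})=d(A)$; hence
\[
1-e(\TP{[A,n+1]})=\frac{n\,d(A)-d(\underline{A})}{(n+1)d(A)-d(\underline{A})}.
\]
For the left-hand side I would compute $d(\TW{n}\TA A^{\ast})$ and $d(\overline{\TW{n}\TA A^{\ast}})$ simultaneously by induction on $n$. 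The key observation is $\TW{n}\TA A^{\ast}=[2,\,\TW{n-1}\TA A^{\ast}]$ for $n\ge2$, which via Lemma~\ref{lem:det1}(i) yields the coupled recursion $d(\overline{\TW{n}\TA A^{\ast}})=d(\TW{n-1}\TA A^{\ast})$ and $d(\TW{n}\TA A^{\ast})=2\,d(\TW{n-1}\TA A^{\ast})-d(\overline{\TW{n-1}\TA A^{\ast}})$. The case $n=1$ is done by hand: $\TW{1}\TA A^{\ast}=[a_1^{\ast}+1,\overline{A^{\ast}}]$ with $a_1^{\ast}$ the first entry of $A^{\ast}$, so $d(\TW{1}\TA A^{\ast})=d(A^{\ast})+d(\overline{A^{\ast}})$, and combining this with $d(\overline{A^{\ast}})=d(A^{\ast})-d(\underline{A})=d(A)-d(\underline{A})$ from Lemma~\ref{lem:indf}(ii) gives the initial values. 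Solving the recursion yields $d(\TW{n}\TA A^{\ast})=(n+1)d(A)-d(\underline{A})$ and $d(\overline{\TW{n}\TA A^{\ast}})=n\,d(A)-d(\underline{A})$, so $e(\TW{n}\TA A^{\ast})$ agrees with the expression above and (i) follows.

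For (ii) I induct on $r$. When $r=1$, $A=\TP{A}=[a_1+1]$ has $e(A)=1/(a_1+1)$, hence $e(A^{\ast})=1-1/(a_1+1)=a_1/(a_1+1)=e(\TW{a_1})$ (as $e(\TW{m})=m/(m+1)$), and therefore $A^{\ast}=\TW{a_1}$. For $r>1$ write $A=[A',a_r+1]$ with $A'=[a_1+1,\dots,a_{r-1}+1]$, which is again admissible; part (i) gives $A^{\ast}=\TW{a_r}\TA(A')^{\ast}$, the induction hypothesis gives $(A')^{\ast}=\TW{a_{r-1}}\TA\cdots\TA\TW{a_1}$, and associativity of $\TA$ completes the argument.

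The only delicate step is the discriminant bookkeeping in (i) — pinning down the coupled recursion and its initial values exactly, and handling $n=1$ separately, since there $\TW{n}\TA A^{\ast}$ does not arise from $\TW{n-1}\TA A^{\ast}$ by prepending a $(-2)$-vertex. Everything else is formal manipulation with Lemmas~\ref{lem:det1} and~\ref{lem:indf} together with the bijectivity of the inductance function.
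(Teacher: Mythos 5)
Your proof is correct, and it proceeds exactly along the lines the paper intends: the paper omits the argument, saying only that the lemma follows from Lemma~\ref{lem:det1} and Lemma~\ref{lem:indf}, and your verification — reducing (i) to the inductance identity via the injectivity of $e$, computing $d([A,n+1])$ and $d(\TW{n}\TA A^{\ast})$, $d(\overline{\TW{n}\TA A^{\ast}})$ by the recursions of Lemma~\ref{lem:det1}(i) with initial values supplied by $d(A)=d(A^{\ast})=d(\overline{A^{\ast}})+d(\underline{A})$ from Lemma~\ref{lem:indf}(ii), and then deducing (ii) from (i) by induction on $r$ — is precisely that computation. No gaps; the only unproved auxiliary fact, $e(\TW{m})=m/(m+1)$, is immediate from $d(\TW{m})=m+1$.
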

\subsection{Vanishing theorem and Zariski decomposition}\label{ssec:zariski}
Let $V$ be a smooth projective surface,
$K$ a canonical divisor
and $D\ne0$ a $\mathbf{Q}$-divisor on $V$.
Write $D$ as $D=\sum_{i=1}^r q_iD_i$,
where $q_i\in\mathbf{Q}\setminus\{0\}$ and
all $D_i$'s are distinct irreducible curves.
The divisor $D$ is said to be \emph{numerically effective}
(\emph{nef}, for short)
if $DC\ge0$ for all curves $C$ on $V$.
We define
$\lfloor D\rfloor=\sum_i\lfloor q_i\rfloor D_i$
and
$\lceil D\rceil=-\lfloor-D\rfloor$,
where $\lfloor q_i\rfloor$ is the greatest integer less than or equal to $q_i$.
We will use the following vanishing theorem
(\cite{miyaoka}, \cite[Theorem 5.1]{sakai:weil}).
\begin{theorem}\label{thm:miyaoka}
Let $D$ be a nef $\mathbf{Q}$-divisor on a smooth projective surface V
and $K$ a canonical divisor on $V$.
If $D^2>0$, then $h^i(V,K+\lceil D\rceil)=0$ for $i>0$.
\end{theorem}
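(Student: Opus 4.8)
The plan is to establish this as the surface case of the Kawamata--Viehweg vanishing theorem, reducing it to Kodaira's vanishing theorem --- equivalently, on a surface, to the Ramanujam--Mumford vanishing theorem, which asserts that $H^1(W,\mathcal{O}_W(-L))=0$ whenever $L$ is a nef line bundle with $L^2>0$ on a smooth projective surface $W$.

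First I would use Serre duality on $V$ to rewrite the two groups in question as $h^2(V,K+\lceil D\rceil)=h^0(V,\mathcal{O}_V(-\lceil D\rceil))$ and $h^1(V,K+\lceil D\rceil)=h^1(V,\mathcal{O}_V(-\lceil D\rceil))$. The first of these vanishes immediately: since $D$ is nef with $D^2>0$, the Hodge index theorem forces $D\cdot A>0$ for every ample divisor $A$ (a class orthogonal to $A$ has nonpositive self-intersection), and since $\lceil D\rceil-D$ is an effective $\mathbf{Q}$-divisor we get $\lceil D\rceil\cdot A>0$; hence $-\lceil D\rceil$ is not linearly equivalent to an effective divisor, so $h^0(V,\mathcal{O}_V(-\lceil D\rceil))=0$. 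Everything therefore comes down to proving $H^1(V,\mathcal{O}_V(-\lceil D\rceil))=0$.

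For this I would write $\lceil D\rceil=D+\Delta$ with $\Delta:=\lceil D\rceil-D$ effective and $\lfloor\Delta\rfloor=0$, and recall that $D$ is big by Kodaira's lemma. The idea is the standard cyclic covering argument. After blowing $V$ up so that $\Delta$ has simple normal crossing support, one constructs a finite cyclic cover $\pi\colon\bar W\to V$ whose branch data is tailored to $\Delta$ and to a sufficiently divisible multiple of $D$, so that $\bar W$ has only quotient (hence rational) singularities; resolving them produces a smooth projective surface $W$ together with a generically finite morphism $\rho\colon W\to V$ with two properties. First, $\mathcal{O}_V$ is a direct summand of $\rho_*\mathcal{O}_W$ --- by the trace map, using that $\rho$ has degree invertible in $\mathbf{C}$ and that resolving rational singularities introduces no higher direct images --- so that $H^1(V,\mathcal{O}_V(-\lceil D\rceil))$ is a direct summand of $H^1(W,\rho^*\mathcal{O}_V(-\lceil D\rceil))$. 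Second, the cover is engineered so that the fractional part $\Delta$ is absorbed upstairs and this latter $H^1$ is forced to vanish by the classical (integral) Kodaira/Ramanujam vanishing theorem on the smooth surface $W$. Combining the two gives the result.

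The main obstacle is precisely that second step: organizing the cyclic cover and the resolution so that the rounding operation $\lceil\,\cdot\,\rceil$ downstairs is faithfully reflected, on the \emph{smooth} cover $W$, by a genuine Kodaira/Ramanujam vanishing situation, and checking that the correction terms that arise (from normalization, from the exceptional locus of the resolution, and from the blow-ups used to make $\Delta$ a simple normal crossing divisor) are effective and exceptional, hence harmless both for the vanishing on $W$ and for the descent along $\rho_*\mathcal{O}_W$. Should that bookkeeping prove too delicate to carry out cleanly here, the safe alternative is to invoke the Kawamata--Viehweg vanishing theorem directly: it applies verbatim to the nef and big $\mathbf{Q}$-divisor $D$ and yields the statement at once.
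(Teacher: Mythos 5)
This statement is not proved in the paper at all: it is the Miyaoka--Sakai vanishing theorem, quoted with references (\cite{miyaoka}, \cite[Theorem 5.1]{sakai:weil}), so there is no internal proof to measure you against; what you have written is an attempt to reprove a known cited result. The easy reductions in your sketch are fine: Serre duality gives $h^2(V,K+\lceil D\rceil)=h^0(V,-\lceil D\rceil)$ and $h^1(V,K+\lceil D\rceil)=h^1(V,-\lceil D\rceil)$, and the $h^0$ vanishing via $\lceil D\rceil\cdot A>0$ for an ample $A$ (using $D\cdot A>0$ from nefness, $D^2>0$ and Hodge index, plus effectivity of $\lceil D\rceil-D$) is correct. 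Your overall route --- cyclic covers reducing to Ramanujam--Mumford vanishing --- is also in the spirit of Miyaoka's original argument.

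The genuine gap is exactly the point you set aside. The theorem as stated imposes no condition on the fractional part $\Delta=\lceil D\rceil-D$: its support need not be a simple normal crossing divisor (indeed $\Delta$ with coefficients in $(0,1)$ need not even be klt), whereas the standard Kawamata--Viehweg theorem you propose to ``invoke verbatim'' as a fallback requires SNC support of the fractional part. So the fallback does not apply verbatim, and the surface-specific removal of that hypothesis is precisely the content of the cited theorem --- it is not a harmless bookkeeping issue. To close it you would have to carry out the deferred step honestly, e.g.\ take an embedded resolution $f\colon V'\to V$ of $\Delta$, note that $f^*D$ is still nef with $(f^*D)^2=D^2>0$ and now has SNC fractional part, apply KV on $V'$, and then descend using $f_*\mathcal{O}_{V'}(K_{V'}+\lceil f^*D\rceil)=\mathcal{O}_V(K_V+\lceil D\rceil)$ together with $R^1f_*\mathcal{O}_{V'}(K_{V'}+\lceil f^*D\rceil)=0$; the equality of direct images and the relative vanishing are the nontrivial comparisons (rounding does not commute with pullback), and they, or the equivalent cyclic-cover bookkeeping you mention, constitute the actual proof. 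As written, the proposal proves the $h^2$ part and outlines, but does not establish, the $h^1$ part. (Incidentally, in the paper's only application, Lemma~\ref{lem:2kd}, the fractional part of the nef divisor $H$ is the bark, which is supported on the SNC divisor $D$, so the weaker SNC version of KV would suffice there; but that is a feature of the application, not of the theorem as stated.)
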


We denote by $\mathbf{Q}(D)$
the $\mathbf{Q}$-vector space generated by $D_1,\ldots,D_r$.
The divisor $D$ is said to be \emph{contractible}
if the intersection form defined on $\mathbf{Q}(D)$
is negative definite (\cite[Section 6]{fujita}).
The divisor $D$ is said to be \emph{pseudo-effective}
if $DH\geq0$ for all nef divisors $H$ on $V$.
By \cite[Theorem 6.3]{fujita},
if $D$ is pseudo-effective,
then there exists an effective $\mathbf{Q}$-divisor $N$
satisfying the following conditions.
\begin{enumerate}
\item
$N$ is contractible if $N\ne0$.
\item
$H=D-N$ is nef.
\item
$HE=0$ for all irreducible components $E$ of $N$.
\end{enumerate}
The divisor $N$ is determined by
the numerical equivalence class of $D$
by \cite[Lemma 6.4]{fujita}.
The decomposition $D=H+N$ is called the \emph{Zariski decomposition} of $D$.
The divisor $N$ (resp.\ $H$)
is called the \emph{negative part} (resp.\ \emph{nef part}) of $D$.

From now on,
we assume that $V$ and $D$ satisfy the following conditions.
\begin{enumerate}
\item[(Z1)]
$D\ne0$ is an SNC-divisor such that $K+D$ is pseudo-effective.
\item[(Z2)]
Each ($-1$)-curve $E\le D$ satisfies $(D-E)E> 2$ or,
$(D-E)E=2$ and $E$ intersects only a single irreducible component of $D$.
\end{enumerate}
Following \cite{fujita,mits},
we use the following terminology.
The divisor $D$ is said to be \emph{rational}
if each $D_i$ is rational.
Let $0<T=\sum_{j=1}^t T_j\le D$ be a divisor,
where $T_j$'s are irreducible.
The divisor $T$ is called a \emph{twig} of $D$
if $(D-T_1)T_1=1$, $(D-T_j)T_j=2$ and $T_{j-1}T_j=1$ for $j\ge2$.
Suppose that $T$ is a rational twig of $D$.
The twig $T$ is said to be \emph{admissible} if $T_j^2<-1$ for all $j$.
We infer under the assumption (Z2) that
a rational twig is contractible if and only if it is admissible.
There exists an irreducible component $D_i$ of $D$
such that $T_{t}D_i=1$ and $D_i\nleq T$.
The rational twig $T$ is said to be \emph{maximal} if
$D_i$ is not rational or $(D-D_i)D_i>2$.
Suppose that the twig $T$ is rational and contractible.
The element
$\operatorname{Bk}(T)\in\mathbf{Q}(T)$ satisfying
$\operatorname{Bk}(T)T_j=(K+D)T_j$ for all $j$
is called the \emph{bark} of $T$.

Let $B$ be a connected component of $D$.
The divisor $B$ is called a \emph{rod}
if the dual graph $\Gamma(B)$ of $B$ is linear.
The divisor $B$ is called a \emph{rational fork}
if it satisfies the following conditions.
\begin{enumerate}
\item
$B=C+T^{(1)}+T^{(2)}+T^{(3)}$ is rational,
where
$T^{(1)},T^{(2)},T^{(3)}$ are contractible maximal rational twigs of $D$
and
$C$ is an irreducible curve such that $(B-C)C=3$.
\item
$(K+B-\Sigma_i\operatorname{Bk}(T^{(i)}))C<0$.
\end{enumerate}
Suppose that $B$ is a rational rod or a rational fork.
Suppose also that $B$ is contractible.
The element
$\operatorname{Bk}(B)\in\mathbf{Q}(B)$ satisfying
$\operatorname{Bk}(B)E=(K+B)E$ for all irreducible components $E$ of $B$
is called the \emph{bark} of $B$.
Let $B_{1},B_{2},\dots$ be the all
rational rods and rational forks which are contractible.
Let $T_{1},T_{2},\dots$ denote the all rational maximal twigs
which are contractible and not contained in any $B_{i}$.
The divisor $\operatorname{Bk}(D)=
\sum_{i}\operatorname{Bk}(B_{i})+
\sum_{j}\operatorname{Bk}(T_{j})$
is called the \emph{bark} of $D$.

Let $N$ denote the negative part of $K+D$.
We will use the following facts.
See \cite[Section 6]{fujita} and \cite[Chapter 1]{mits}.
Note that
rods (resp.\ rational forks) are called
\emph{clubs} (resp.\ \emph{abnormal rational clubs})
in \cite[Section 6]{fujita}.
The divisor $\operatorname{Bk}(D)$
is denoted by $\operatorname{Bk}^{\ast}(D)$
and called the \emph{thicker bark} of $D$.
\begin{lemma}\label{lem:zd2}
The following assertions hold
under the assumptions (Z1), (Z2).
\begin{enumerate}
\item
All rational rods, rational forks and rational twigs of $D$
belong to $\mathbf{Q}(N)$.
In particular, they are contractible.
\item
If $N\ne\operatorname{Bk}(D)$ and
any connected component of $D$ is not a rational rod,
then there exists a ($-1$)-curve $E\subset\operatorname{Supp}(N)$ such that
$DE\le1$ and $E\nleq D$.
Moreover, $E$ meets with $\operatorname{Supp}(\operatorname{Bk}(D))$ if $DE=1$.
\end{enumerate}
\end{lemma}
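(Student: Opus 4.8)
\medskip

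\noindent\textit{Sketch of the argument.}
The plan is to follow the peeling arguments of \cite[Section 6]{fujita} and \cite[Chapter 1]{mits}. Write $K+D=H+N$ for the Zariski decomposition, so $H$ is nef, $N\ge0$ is contractible, and $H\cdot E=0$ for every irreducible component $E$ of $N$; recall that the bark of a contractible rational twig, rod or fork is effective with coefficients in $[0,1)$, and that $\operatorname{Bk}(D)$ is the sum of these barks over the pairwise disjoint contractible rational twigs, rods and forks of $D$.

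\medskip

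\emph{Part (i).} First, (Z2) forbids a $(-1)$-curve inside a rational twig or a rational rod: it would meet at most two components of $D$ with total intersection $\le2$, and never a single component of an SNC-divisor with intersection $2$, contradicting (Z2). Hence every rational twig and every rational rod is an admissible linear chain, so by Lemma~\ref{lem:det1}(iii) all leading principal minors of $(-D_iD_j)$ are positive, the intersection form is negative definite, and the chain is contractible. To show that a rational twig, rod or fork lies in $\mathbf{Q}(N)$ it suffices to put its support into $\operatorname{Supp}(N)$. For a rational twig $T=T_1+\cdots+T_t$ with free end $T_1$, adjunction gives $(K+D)\cdot T_1=-1$ and $(K+D)\cdot T_j=0$ for $j\ge2$. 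Since $T_1\notin\operatorname{Supp}(N)$ would force $N\cdot T_1\ge0$ and hence $H\cdot T_1=(K+D)\cdot T_1-N\cdot T_1<0$, we get $T_1\in\operatorname{Supp}(N)$; moving along the chain, each next $T_j$ then has a neighbour in $\operatorname{Supp}(N)$, so $T_j\notin\operatorname{Supp}(N)$ would give $N\cdot T_j>0$ and $H\cdot T_j<0$, again impossible. Thus $\operatorname{Supp}(T)\subseteq\operatorname{Supp}(N)$, and the same propagation started from both ends handles a rational rod. For a rational fork $B=C+T^{(1)}+T^{(2)}+T^{(3)}$, a connected component of $D$ with the $T^{(i)}$ maximal rational twigs, the three twigs lie in $\operatorname{Supp}(N)$ by the twig case; then $H\cdot E=0$ on the components $E$ of $N$, together with the fact that the inverse of the intersection matrix of an admissible chain has nonnegative entries, shows that the part of $N$ supported on each $T^{(i)}$ dominates $\operatorname{Bk}(T^{(i)})$. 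Consequently, if $C\notin\operatorname{Supp}(N)$, then $N\cdot C\ge\sum_i\operatorname{Bk}(T^{(i)})\cdot C$, whence $H\cdot C=(K+D)\cdot C-N\cdot C\le(K+B-\sum_i\operatorname{Bk}(T^{(i)}))\cdot C<0$ by the defining inequality of a rational fork (using $(K+D)\cdot C=(K+B)\cdot C$), contradicting the nef-ness of $H$. Hence $\operatorname{Supp}(B)\subseteq\operatorname{Supp}(N)$, so $B\in\mathbf{Q}(N)$, and contractibility of $N$ forces contractibility of $B$. The twig argument being component-wise, every sub-twig is covered as well.

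\medskip

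\emph{Part (ii).} The same estimate, applied to every piece of $\operatorname{Bk}(D)$, gives $N\ge\operatorname{Bk}(D)$; so $N_0:=N-\operatorname{Bk}(D)$ is effective, nonzero by hypothesis, contractible, and $K+D-\operatorname{Bk}(D)=H+N_0$ is a Zariski decomposition. I would then induct on the Picard number $\rho(V)$. If $N_0$ contains a $(-1)$-curve $E'$ with $E'\le D$, or a $(-1)$-curve $E'$ with $D\cdot E'\ge2$, then contracting $E'$ preserves (Z1) and (Z2) and strictly decreases the negative part, so the induction hypothesis applies; thus we may assume that every component of $N_0$ which is not a $(-1)$-curve with $D\cdot E\le1$ and $E\nleq D$ has self-intersection $\le-2$. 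Using that $\operatorname{Bk}(D)$ has coefficients in $[0,1)$ --- so that $K+D-\operatorname{Bk}(D)$ is already peeled along $D$ --- together with the negative-definiteness of $N$, one shows that such a configuration with $N_0\ne0$ can survive only when $D$ has a connected component which is a rational rod, the case excluded by hypothesis (there the negative part can strictly exceed its bark with no usable $(-1)$-curve). This produces a $(-1)$-curve $E\subseteq\operatorname{Supp}(N)$ with $D\cdot E\le1$ and $E\nleq D$. Finally, if $D\cdot E=1$, then the unique component $D_i$ of $D$ met by $E$ must lie in $\operatorname{Supp}(\operatorname{Bk}(D))$: otherwise contracting $E$ leaves (Z1), (Z2) and the bark unchanged while strictly decreasing the negative part, and iterating would contradict $N\ne\operatorname{Bk}(D)$.

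\medskip

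\emph{Main obstacle.} The discriminant computations and the chain propagation of part (i) --- whose only substantive point is the domination of $\operatorname{Bk}(T^{(i)})$ by the part of $N$ on $T^{(i)}$, an elementary positivity fact about the inverse of an admissible intersection matrix --- are routine. The real difficulty is the existence of the contractible $(-1)$-curve in part (ii): locating it, and verifying the clause about meeting $\operatorname{Supp}(\operatorname{Bk}(D))$, requires precise control of how $\operatorname{Bk}(D)$ sits inside $N$, of which components of $D$ belong to $\operatorname{Supp}(N)$, and of how (Z1) and (Z2) behave under contraction of the candidate curves. This inductive minimalization is the technical core of \cite[Section 6]{fujita} and \cite[Chapter 1]{mits}, and it is there, not in part (i), that the work lies.
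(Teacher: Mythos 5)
Your part (i) is correct and follows essentially the same route as the paper: the paper quotes Fujita's Lemma 6.13 for rods and twigs and, for a fork $B=C+T^{(1)}+T^{(2)}+T^{(3)}$, uses exactly your inequality $(N-\sum_i\operatorname{Bk}(T^{(i)}))C\le(K+D-\sum_i\operatorname{Bk}(T^{(i)}))C<0$ together with Fujita's Lemma 6.15; your chain propagation and the positivity property of the inverse of an admissible intersection matrix are a legitimate in-line substitute for those two citations.

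Part (ii), however, is where the genuine gap lies, and your own closing paragraph concedes it. The paper proves (ii) by observing (from (i) and (Z2)) that all rational twigs are admissible and then invoking Fujita's Lemma 6.20 (see also Sections 1.6--1.8 of \cite{mits}); that cited lemma \emph{is} the statement to be proved. Your replacement argument asserts, without proof, the decisive claim that a minimal configuration with $N_0=N-\operatorname{Bk}(D)\ne0$ and no usable $(-1)$-curve can occur only when some connected component of $D$ is a rational rod --- this is precisely the content of Fujita's peeling lemma, so nothing has been gained. Moreover the inductive step as stated does not work: under (Z2), since $D$ is SNC, any $(-1)$-curve $E'\le D$ has $(D-E')E'\ge3$, so contracting it produces a point where at least three branches of the image of $D$ meet and the image is no longer an SNC-divisor; similarly, contracting a $(-1)$-curve $E'\nleq D$ with $DE'\ge2$ can create components of the image of $D$ meeting twice, or turn a $(-2)$-curve of $D$ into a $(-1)$-curve violating (Z2). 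Hence ``contracting $E'$ preserves (Z1) and (Z2)'' is false without the boundary modifications that constitute the actual peeling argument, and one must also check that the hypotheses ``$N\ne\operatorname{Bk}(D)$'' and ``no component of $D$ is a rational rod'' persist downstairs. The final clause (that $E$ meets $\operatorname{Supp}(\operatorname{Bk}(D))$ when $DE=1$) is likewise only a heuristic descent, not an argument. To repair the write-up, either cite \cite[Lemma 6.20]{fujita} as the paper does --- recording, as the paper is careful to do, that the rational twigs are admissible so that the lemma applies --- or carry out the minimalization of \cite[Section 6]{fujita} in full; the sketch as it stands does neither.
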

\begin{proof}
(i)
By \cite[Lemma 6.13]{fujita},
all rational rods and
rational twigs of $D$ belong to $\mathbf{Q}(N)$.
Let $B$ be a rational fork.
Write $B$ as $B=C+T^{(1)}+T^{(2)}+T^{(3)}$
as in the definition of rational forks.
Since $(N-\Sigma_i\operatorname{Bk}(T^{(i)}))C\le(K+D-\Sigma_i\operatorname{Bk}(T^{(i)}))C<0$,
we have $C\in\mathbf{Q}(N)$
by \cite[Lemma 6.15]{fujita}.
Hence $B\in\mathbf{Q}(N)$.

(ii)
We note that all rational twigs are admissible by (i) and (Z2).
Thus the assertion follows from \cite[Lemma 6.20]{fujita}.
See also Section 1.6, 1.7, 1.8 of \cite{mits}.
\end{proof}

\begin{lemma}\label{lem:2kd}
In addition to the assumptions (Z1), (Z2),
suppose that $N=\operatorname{Bk}(D)$, $\overline{\kappa}(V\setminus D)=2$
and that
every rational rod and rational fork of $D$
contains an irreducible component $E$ with $E^2<-2$.
Then the following assertions hold.
\begin{enumerate}
\item
$\lfloor N\rfloor=0$.
\item
$h^1(V,2K+D)=h^2(V,2K+D)=0$.
\item
$h^0(V,2K+D)=K(K+D)+D(K+D)/2+\chi(\mathcal{O}_V)$.
\end{enumerate}
\end{lemma}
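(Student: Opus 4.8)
The plan is to prove (i) from the explicit structure of the bark of $D$, and then to get (ii) and (iii) as essentially formal consequences of (i) via the vanishing theorem and Riemann--Roch.

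For (i), recall that by hypothesis $N=\operatorname{Bk}(D)=\sum_i\operatorname{Bk}(B_i)+\sum_j\operatorname{Bk}(T_j)$, where the $B_i$ are the contractible rational rods and forks and the $T_j$ are the contractible rational maximal twigs not contained in any $B_i$. Since rods and forks are connected components of $D$ and distinct maximal twigs have disjoint support, all of these divisors have pairwise disjoint support, so it is enough to check that each of $\operatorname{Bk}(B_i)$ and $\operatorname{Bk}(T_j)$ has all its coefficients in $[0,1)$. For a maximal twig $T=[b_1,\dots,b_t]$, which is admissible by Lemma~\ref{lem:zd2}(i) together with (Z2), one solves the defining linear system $\operatorname{Bk}(T)\cdot T_k=(K+D)\cdot T_k$; using the recursions in Lemma~\ref{lem:det1}(i) one checks that the coefficient of $T_k$ equals $d([b_{k+1},\dots,b_t])/d(T)$, and then the monotonicity $d(T)>d(\overline T)>0$ iterated as in Lemma~\ref{lem:det1}(iii) shows this number lies strictly between $0$ and $1$. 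For a contractible rational rod or fork $B$, the analogous computation (carried out in \cite[Section 6]{fujita} and \cite[Chapter 1]{mits}) shows that the coefficients of $\operatorname{Bk}(B)$ lie in $[0,1)$ exactly because $B$ contains a component with self-intersection $\le-3$; if $B$ consisted only of $(-2)$-curves one would instead get $\operatorname{Bk}(B)=B$, so this is where the extra hypothesis on rods and forks is used. Hence $\lfloor N\rfloor=\sum_i\lfloor\operatorname{Bk}(B_i)\rfloor+\sum_j\lfloor\operatorname{Bk}(T_j)\rfloor=0$.

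For (ii), write the Zariski decomposition $K+D=H+N$, which exists by (Z1). Since $D$ is an SNC-divisor, $\overline{\kappa}(V\setminus D)=\kappa(V,K+D)$, so the assumption $\overline{\kappa}(V\setminus D)=2$ means $H$ is big and nef; in particular $H^2>0$. Because $K+D$ is an integral divisor and $\lfloor N\rfloor=0$ by (i), we get $\lceil H\rceil=\lceil(K+D)-N\rceil=(K+D)-\lfloor N\rfloor=K+D$, hence $K+\lceil H\rceil=2K+D$. Applying Theorem~\ref{thm:miyaoka} to the nef $\mathbf{Q}$-divisor $H$ yields $h^i(V,2K+D)=h^i(V,K+\lceil H\rceil)=0$ for all $i>0$, which gives (ii). Then (iii) follows from Riemann--Roch: $\chi(V,2K+D)=\chi(\mathcal{O}_V)+\tfrac12(2K+D)\bigl((2K+D)-K\bigr)=\chi(\mathcal{O}_V)+K(K+D)+\tfrac12D(K+D)$, which is exactly the asserted right-hand side, and by (ii) this equals $h^0(V,2K+D)$.

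The main obstacle is (i); parts (ii) and (iii) are then routine bookkeeping with the vanishing theorem and Riemann--Roch. Within (i) the twig estimate is a direct computation with the discriminant identities of Lemma~\ref{lem:det1}, while the point that really requires the hypothesis — and where one must appeal to the structure theory of \cite{fujita,mits} — is that every contractible rational rod or fork occurring in $\operatorname{Bk}(D)$ has bark coefficients $<1$ as soon as it contains a component of self-intersection $\le-3$.
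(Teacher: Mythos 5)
Your proof is correct and follows essentially the same route as the paper: part (i) is the statement of \cite[Section 1.4 and Lemma 1.5]{mits} (you partly recompute the twig coefficients via Lemma~\ref{lem:det1} instead of only citing it), part (ii) is Theorem~\ref{thm:miyaoka} applied to the nef part $H$ with $H^2>0$ coming from $\overline{\kappa}=2$ (the paper cites \cite{kawamata-class} for this), and part (iii) is the same Riemann--Roch computation.
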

\begin{proof}
We have
$\lfloor N\rfloor=\lfloor\operatorname{Bk}(D)\rfloor=0$
by \cite[Section 1.4 and Lemma 1.5]{mits}
and the assumption.
Since $\overline{\kappa}(V\setminus D)=2$,
we see $H^2>0$ by \cite{kawamata-class}.
We apply Theorem~\ref{thm:miyaoka} to $H$.
We have
$0=h^i(V,K+\lceil H\rceil)=h^{i}(V,2K+D+\lceil -N\rceil)=h^{i}(V,2K+D)$
for $i=1,2$.
The last equality follows from the Riemann-Roch formula.
\end{proof}
\subsection{The bigenus of $\mathbf{Q}$-homology planes}\label{ssec:homology}
Let $V$ be a smooth projective surface,
$K$ a canonical divisor
and $D\ne0$ an SNC-divisor on $V$.
Suppose that $X:=V\setminus D$ is
a $\mathbf{Q}$-homology plane.
That is, $H_i(X,\mathbf{Q})=\{0\}$ for $i>0$.
In this section, we compute the \emph{bigenus}
$h^0(V,2K+D)$ of $X$ (\cite{sakai:kdim}).
We will use the following facts
(\cite[Corollary 2.5, Theorem 2.8]{fujita},
\cite[Theorem II.4.2]{hartshorne:ample}
and \cite[Main Theorem]{mits:absence}).
\begin{lemma}\label{lem:fujita-h-mt}
The following assertions hold,
\begin{enumerate}
\item
$X$ is affine.
\item
$h^1(V,\mathcal{O}_V)=h^2(V,\mathcal{O}_V)=0$.
\item
$\Gamma(D)$ is a connected rational tree.
\item
$D$ is not contractible.
\item
If $\overline{\kappa}(X)=2$,
then $X$ does not contain topologically contractible algebraic curves.
\end{enumerate}
\end{lemma}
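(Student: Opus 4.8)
The plan is simply to match each of the five assertions to the source cited immediately before the statement and to record how the single hypothesis that $X=V\setminus D$ is a $\mathbf{Q}$-homology plane enters each one; nothing here is new, and the only assertion requiring an argument beyond pure citation is (iv).

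For (i)--(iii): since $H_i(X,\mathbf{Q})=0$ for all $i>0$ and $D$ is SNC, the exact sequence relating the cohomology of $V$, $X$ and $D$ (the Thom--Gysin sequence along the boundary) forces the boundary configuration to be homologically trivial, so $D$ is connected, every component $D_i$ is rational, and $\Gamma(D)$ carries no loop, i.e.\ is a tree; moreover $X$ is affine and the completing surface $V$ is rational. This is exactly what is recorded in \cite[Corollary 2.5 and Theorem 2.8]{fujita}. Assertion (ii) then follows at once: as $V$ is a rational surface, $h^1(V,\mathcal{O}_V)=q(V)=0$ and $h^2(V,\mathcal{O}_V)=p_g(V)=0$.

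For (iv): suppose the intersection form on $\mathbf{Q}(D)$ were negative definite. By Grauert's criterion $D$ contracts to a normal point configuration: there is a normal projective surface $\overline{V}$ and a morphism $\pi\colon V\to\overline{V}$ collapsing $\operatorname{Supp}(D)$ to a finite set $S$ and inducing an isomorphism $X\cong\overline{V}\setminus S$. As $\overline{V}$ is normal, projective, and two-dimensional and $S$ is finite, every regular function on $X\cong\overline{V}\setminus S$ extends over $S$ by Hartogs and is therefore constant, contradicting (i), since a positive-dimensional affine variety has nonconstant coordinate functions. This is the content of \cite[Theorem II.4.2]{hartshorne:ample}, which more precisely shows that the support of the boundary divisor of an affine open subset of a complete surface must carry an ample divisor---impossible for a negative-definite configuration.

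For (v): when $\overline{\kappa}(X)=2$, a topologically contractible algebraic curve in $X$ would be a rational curve that is unibranch with a single place at infinity, and the Main Theorem of \cite{mits:absence} asserts that a $\mathbf{Q}$-homology plane of log general type contains no such curve. I do not expect a genuine obstacle anywhere in the proof: (i)--(iv) are routine consequences of the standard theory of $\mathbf{Q}$-homology planes, while (v) is the one substantial input, resting on the classification and peeling machinery developed in \cite{mits:absence}.
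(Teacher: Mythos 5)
Your proposal is correct and follows essentially the same route as the paper, which gives no argument beyond citing \cite[Corollary 2.5, Theorem 2.8]{fujita} for (i)--(iii), \cite[Theorem II.4.2]{hartshorne:ample} for (iv), and the Main Theorem of \cite{mits:absence} for (v); your added details (rationality of $V$ giving (ii), and the Grauert-contraction/Hartogs argument showing a negative definite boundary would force $\mathcal{O}(X)=\mathbf{C}$, contradicting affineness) are accurate fillings-in of those citations.
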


From now on,
we assume that $V$ and $D$ satisfy the following conditions.
\begin{itemize}
\item[(H1)]
$\overline{\kappa}(X)=2$.
\item[(H2)]
All ($-1$)-curves $E\le D$ satisfy $(D-E)E> 2$.
\end{itemize}

Note that $V,D$ satisfy the conditions (Z1), (Z2) in Section~\ref{ssec:zariski}.
Let $K+D=H+N$ be the Zariski decomposition,
where $N$ is the negative part of $K+D$.
\begin{proposition}\label{prop:gamma2}
Let $V$, $D$ be as above
satisfying the assumptions (H1), (H2).
Then the following assertions hold.
\begin{enumerate}
\item
$D$ is neither a rational rod nor a rational fork.
\item
We have $N=\operatorname{Bk}(D)$, $\lfloor N\rfloor=0$.
\item
We have $h^1(V,2K+D)=h^2(V,2K+D)=0$, $h^0(V,2K+D)=K(K+D)$.
\end{enumerate}
\end{proposition}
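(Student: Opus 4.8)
The strategy is to verify the hypotheses of Lemma~\ref{lem:2kd} for the pair $(V,D)$, since its conclusions (ii) and (iii) are exactly statements (ii)\ and (iii)\ we want once we identify $\chi(\mathcal O_V)=1$ (which follows from Lemma~\ref{lem:fujita-h-mt}(ii)) and observe that $D(K+D)=0$ because each $D_i\simeq\mathbf P^1$ is an SNC-component of a tree, so the adjunction term $D_i(K+D_i)=-2$ together with the tree combinatorics forces $D(K+D)=\sum_iD_i(K+D)=\sum_i(D_i^2+D_iK)+\sum_{i<j}2D_iD_j=\sum_i(-2)+2(\#\text{edges})=-2r+2(r-1)=-2$; combined with $K(K+D)$ this yields the claimed $h^0$. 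Wait—more carefully, $K(K+D)+D(K+D)/2=K(K+D)-1$, and $\chi(\mathcal O_V)=1$, so $h^0(V,2K+D)=K(K+D)$, matching (iii). So the real content is (i), namely that $D$ is neither a rational rod nor a rational fork, plus checking $N=\operatorname{Bk}(D)$ and the ``$E^2<-2$'' condition on rods and forks inside $D$.

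\emph{Part (i).} Suppose $D$ were a rational rod or a rational fork. Both are contractible divisors (a rod is a linear chain; for the fork the contractibility is part of the data together with the bark inequality). But $X=V\setminus D$ is a $\mathbf Q$-homology plane, so by Lemma~\ref{lem:fujita-h-mt}(iv) the divisor $D$ is \emph{not} contractible—contradiction. Hence $D$ is neither, proving (i). (One should double-check that when $D$ is a single connected component with linear dual graph it literally is a ``rod'' in the sense defined, and that a connected $D$ with one branch point and three twigs satisfying the genus-type inequality is a ``rational fork''; the point is simply that in either case contractibility would contradict Lemma~\ref{lem:fujita-h-mt}(iv).)

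\emph{Part (ii) and the remaining Lemma~\ref{lem:2kd} hypotheses.} First, (H2) is precisely (Z2) in the strengthened form with no exceptional ($-1$)-curves meeting only one component; and (Z1) holds because $K+D$ is pseudo-effective—indeed $\overline\kappa(X)=2$ forces $\overline\kappa(V,K+D)=2\ge0$, so some multiple of $K+D$ is effective, hence $K+D$ is pseudo-effective. Next I must show $N=\operatorname{Bk}(D)$. By Lemma~\ref{lem:zd2}(i) all rational rods, forks and twigs of $D$ lie in $\mathbf Q(N)$ and are contractible, so $\operatorname{Bk}(D)$ is defined and $\operatorname{Bk}(D)\le N$ in the appropriate sense. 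If $N\ne\operatorname{Bk}(D)$, then since by (i) no connected component of $D$ is a rational rod, Lemma~\ref{lem:zd2}(ii) produces a ($-1$)-curve $E\subset\operatorname{Supp}(N)$ with $E\nleq D$ and $DE\le1$. If $DE=0$ then $E$ is a topologically contractible curve in $X$, contradicting Lemma~\ref{lem:fujita-h-mt}(v). If $DE=1$, then $E$ meets $\operatorname{Supp}(\operatorname{Bk}(D))$; contracting $E$ and analysing the resulting configuration—$E$ attaches to an admissible twig or the end of a rod/fork and lowers a weight—one checks this contradicts either the $\mathbf Q$-homology-plane structure (the contraction would create a contractible component, violating Lemma~\ref{lem:fujita-h-mt}(iv) for the image) or minimality built into (H2)/(Z2). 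Thus $N=\operatorname{Bk}(D)$, and $\lfloor N\rfloor=\lfloor\operatorname{Bk}(D)\rfloor=0$ by the standard property of barks of admissible twigs/rods/forks (\cite[Section 1.4, Lemma 1.5]{mits}), since by Lemma~\ref{lem:zd2}(i) and (Z2) every twig is admissible. Finally, for Lemma~\ref{lem:2kd} we need every rational rod and rational fork of $D$ to contain a component of self-intersection $<-2$: by (i) no \emph{connected component} of $D$ is a rod or fork, but $D$ could still contain a proper sub-rod or sub-fork; such a sub-configuration sitting inside a larger SNC-tree has, at its branch-point or attaching vertex, extra incidences, and if all its components had self-intersection $=-2$ one gets a $(-2)$-chain or $(-2)$-fork which (being contractible to a quotient singularity) together with the ambient tree would again violate Lemma~\ref{lem:fujita-h-mt}(iv) or force $\overline\kappa(X)\le1$ via a pencil, contradicting (H1). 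With all hypotheses of Lemma~\ref{lem:2kd} verified, its conclusions give (ii)\ ($\lfloor N\rfloor=0$) and (iii)\ as computed above.

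\emph{Main obstacle.} The delicate point is the equality $N=\operatorname{Bk}(D)$—equivalently, ruling out the ``extra'' ($-1$)-curve $E$ with $DE=1$ coming from Lemma~\ref{lem:zd2}(ii). Unlike the $DE=0$ case, which is immediately killed by Lemma~\ref{lem:fujita-h-mt}(v), the $DE=1$ case requires a genuine combinatorial analysis of how $E$ attaches to $\operatorname{Bk}(D)$ and why the contraction is incompatible with $X$ being a $\mathbf Q$-homology plane of log Kodaira dimension $2$; this is where one leans most heavily on the structure theory in \cite[Chapter 1]{mits} and \cite[Section 6]{fujita}, and it is the step most likely to need the hypothesis (H2) in an essential way.
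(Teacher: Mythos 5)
Your overall strategy (verify the hypotheses of Lemma~\ref{lem:2kd}, get (i) from non-contractibility of $D$, and rule out the extra $(-1)$-curve from Lemma~\ref{lem:zd2}~(ii)) is the paper's strategy, but the proposal has genuine gaps at exactly the points you flag as delicate. The main one is the case $DE=1$ in the proof of $N=\operatorname{Bk}(D)$: you leave it as a sketch ("contracting $E$ and analysing the resulting configuration \dots one checks \dots"), and the argument you gesture at does not work as stated, since Lemma~\ref{lem:fujita-h-mt}~(iv) applies to $(V,D)$ and after contracting $E\nleq D$ the complement of the image of $D$ is no longer $X$. In fact no combinatorial analysis is needed: if $E\nleq D$ and $DE=1$, then $E\setminus D\cong\mathbf{C}$ is a topologically contractible algebraic curve inside $X$, contradicting Lemma~\ref{lem:fujita-h-mt}~(v) (this is where (v) is used; conversely, in your $DE=0$ case $E\cong\mathbf{P}^1$ is \emph{not} topologically contractible --- there the contradiction is with affineness, Lemma~\ref{lem:fujita-h-mt}~(i), since an affine surface contains no complete curve). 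So the step you single out as the "main obstacle" is actually a one-line consequence of Lemma~\ref{lem:fujita-h-mt}~(i),(v), and your proposal does not supply a valid substitute.

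Two further points. In (i), your justification that a rod or a fork is contractible is wrong: a rod is merely a connected component with linear dual graph (weights arbitrary, e.g.\ $[0]$ is a rod and not contractible), and contractibility is not part of the definition of a rational fork either; the contractibility you need is exactly Lemma~\ref{lem:zd2}~(i) (rods, forks and twigs lie in $\mathbf{Q}(N)$ with $N$ negative definite), after which the contradiction with Lemma~\ref{lem:fujita-h-mt}~(iv) goes through as you say. Finally, for the hypothesis of Lemma~\ref{lem:2kd} that every rational rod and fork of $D$ contain a component with $E^2<-2$: in this paper rods and forks are by definition \emph{connected components} of $D$, and $D$ is connected by Lemma~\ref{lem:fujita-h-mt}~(iii); hence by (i) there are no rods or forks at all and the hypothesis is vacuous. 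Your worry about "proper sub-rods or sub-forks" and the ensuing argument about $(-2)$-chains and pencils is based on a misreading of the definitions and is in any case only hand-waving. The computation $D(K+D)=-2$, $\chi(\mathcal{O}_V)=1$ and the resulting $h^0(V,2K+D)=K(K+D)$ is correct and agrees with the paper.
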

\begin{proof}
The divisor $D$ is neither a rational rod nor a rational fork
by Lemma \ref{lem:zd2} (i) and Lemma \ref{lem:fujita-h-mt} (iv).
Suppose $N\ne\operatorname{Bk}(D)$.
By the assertion (i) and Lemma~\ref{lem:zd2} (ii),
there exists a ($-1$)-curve $E\nleq D$
 such that $DE\le1$,
which contradicts
Lemma~\ref{lem:fujita-h-mt} (i), (v).
We have $\chi(\mathcal{O}_V)=1$, $D(K+D)=-2$
by Lemma~\ref{lem:fujita-h-mt} (ii), (iii).
Thus the remaining assertions follow from Lemma~\ref{lem:2kd}.
\end{proof}

We will use the following lemma
to show Theorem~\ref{thm2}.
\begin{lemma}\label{lem:phi}
Let $\varphi:V\rightarrow W$ be
the composition of successive blow-ups
over a smooth projective surface $W$
and $D\ne0$ an SNC-divisor on $V$.
Let $P\in W$ be the center of the first blow-up of $\varphi$
and $E$ the exceptional curve of the last blow-up over $P$.
Assume that the following conditions are satisfied.
\begin{itemize}
\item[(1)]
$V\setminus D$ is a $\mathbf{Q}$-homology plane with
$\overline{\kappa}(V\setminus D)=2$.
\item[(2)]
$E\nleq D$.
\end{itemize}
Then the following assertions hold.
\begin{enumerate}
\item
At least two locally irreducible branches of $\varphi(D)$
pass through $P$.
\item
Suppose that exactly two locally irreducible branches
$D_1,D_2$ of $\varphi(D)$ pass through $P$.
Suppose also that $D_1,D_2$ intersect each other at $P$ transversally.
Let $D_1',D_2'$ denote the proper transforms of $D_1,D_2$
via $\varphi$, respectively.
Then the dual graph of $D_1'+\varphi^{-1}(P)+D_2'$
has the following shape,
where $0\le T_1,T_2\le D$ may be empty.
\begin{center}
\includegraphics{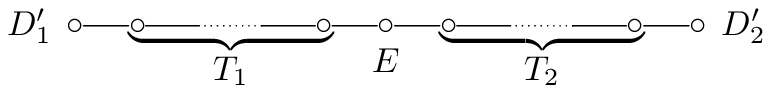}
\end{center}
\end{enumerate}
\end{lemma}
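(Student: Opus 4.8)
The plan is to deduce both parts from a single property of the $(-1)$-curve $E$. By Lemma~\ref{lem:fujita-h-mt} (i) the surface $X:=V\setminus D$ is affine, so it contains no complete curve; hence $E\cap D\ne\varnothing$. Moreover $X$ contains no topologically contractible curve by Lemma~\ref{lem:fujita-h-mt} (v) (using $\overline{\kappa}(X)=2$), whereas $E\setminus D$ is $\mathbf{P}^1$ with $\#(E\cap D)$ points deleted, so it is isomorphic to $\mathbf{A}^1$, hence topologically contractible, when $\#(E\cap D)=1$. Therefore $\#(E\cap D)\ge2$; the same argument shows $\#(Z\cap D)\ge2$ for every irreducible curve $Z\cong\mathbf{P}^1$ on $V$ with $Z\nleq D$. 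Everything that follows is bookkeeping of the dual graph of $\varphi^{-1}(P)$ built on this inequality together with the facts that $\Gamma(D)$ is connected and $D$ is not contractible (Lemma~\ref{lem:fujita-h-mt} (iii), (iv)).

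For (i), suppose $\varphi(D)$ has at most one branch at $P$. If it has none, then $P\notin\varphi(D)$, and a component of $D$ not lying over $P$ --- being the proper transform of a curve on $W$ missing $P$, or an exceptional curve over another point --- is disjoint from $\varphi^{-1}(P)$. Hence $D\cap\varphi^{-1}(P)$ is a union of connected components of $D$, so by connectedness it is $\varnothing$, whence $E\cap D\subseteq\varphi^{-1}(P)\cap D=\varnothing$, contradicting $\#(E\cap D)\ge2$; or it is all of $D$, whence $D$ is contractible, contradicting Lemma~\ref{lem:fujita-h-mt} (iv). If $\varphi(D)$ has exactly one branch, lying on a component $\Gamma$ with proper transform $\Gamma'\le D$, then $\Gamma'$ meets the tree $\varphi^{-1}(P)$ in a single point (the branch is irreducible), and $E$, being the exceptional curve of the last blow-up over $P$, is a leaf of $\varphi^{-1}(P)$ or an interior vertex of degree $2$. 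Thus the components of $D$ that $E$ can meet over $P$ are its at most two neighbours in $\varphi^{-1}(P)$, plus $\Gamma'$ if the attachment point of $\Gamma'$ lies on $E$. I would go through the possibilities compatible with $\#(E\cap D)\ge2$ and check each time that deleting $E$ from $\varphi^{-1}(P)$ detaches a nonempty subdivisor of $D$ which $\Gamma'$, meeting $\varphi^{-1}(P)$ in only one point, cannot reconnect to the remainder of $D$, so $\Gamma(D)$ is disconnected --- a contradiction. Hence $\varphi(D)$ has at least two branches at $P$.

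For (ii), because $D_1,D_2$ are smooth and transversal, the first blow-up at $P$ already makes $D_1^{(1)}+F+D_2^{(1)}$ the chain $[D_1^{(1)},F,D_2^{(1)}]$ (with $F$ the first exceptional curve and $D_1^{(1)}D_2^{(1)}=0$), and throughout $\varphi$ the proper transforms $D_1',D_2'$ stay smooth, each meeting $\varphi^{-1}(P)$ transversally in one point. Every subsequent blow-up over $P$ is sprouting or subdivisional, so the weighted dual graph $G$ of $D_1'+\varphi^{-1}(P)+D_2'$ is obtained from $[D_1^{(1)},F,D_2^{(1)}]$ by inserting vertices into edges and attaching leaves. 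I would then prove: (a) $G$ has no vertex of degree $\ge3$, a surviving sprouting leaf $L$ being ruled out --- whether or not $L\le D$ --- by the same connectedness and non-contractibility arguments as in (i), using $\#(L\cap D)\ge2$ when $L\nleq D$; and (b) $E$ is the only component of $\varphi^{-1}(P)$ not contained in $D$, since a second such curve would, together with $E$, cut off a nonempty interior subdivisor of $D$ that cannot reconnect to $D_1'$ or $D_2'$ (and would also force $\#(E\cap D)\le1$). It follows that $G$ is a linear chain from $D_1'$ to $D_2'$ with $E$ as its unique component outside $D$, lying in the interior; the two subchains of $D$ on either side of $E$ are the twigs $T_1,T_2\le D$ of the figure, empty precisely when $E$ meets $D_1'$ or $D_2'$ directly.

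The technical core is the exhaustion of cases in steps (ii)(a)--(b). A useful additional input is that, since $X$ is a $\mathbf{Q}$-homology plane, the irreducible components of $D$ span $H_2(V,\mathbf{Q})$; consequently the number of exceptional curves of $\varphi$ omitted from $D$ equals the number of irreducible components of $\varphi(D)$ minus the second Betti number of $W$, which, together with the connectedness and non-contractibility of $D$, controls how those omitted curves may be distributed along $\varphi^{-1}(P)$ and forces $G$ into the stated shape.
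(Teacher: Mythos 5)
Your part (i), and the uniqueness-of-$E$ part of (ii), follow essentially the paper's own route: the only inputs are that $V\setminus D$ is affine, contains no topologically contractible curves (so every irreducible $Z\nleq D$ meets $D$ in at least two points), that $\Gamma(D)$ is connected, and that $D$ is not contractible; the paper organizes the case analysis slightly differently (it first shows $T_0\cap D'=\{Q\}$, then $T_0E\ge2$, then detaches a connected piece of $\varphi^{-1}(P)$ that must lie in $V\setminus D$), but your sketch for (i) closes in the same way.

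The genuine gap is step (ii)(a), the linearity of the dual graph. You claim a surviving sprouting leaf $L$ is excluded ``whether or not $L\le D$'' by the same connectedness and non-contractibility arguments; when $L\le D$ this is simply not true, because those tools are blind to a side branch entirely contained in $D$. Concretely, blow up $P$ (exceptional curve $F_1$), then the node $F_1\cap D_2$ (getting $F_2$), then a free point of $F_1$ (getting $L_1$), then the node $L_1\cap F_1$ (getting $L_2$), and finally the node $F_2\cap D_2$ (getting $E$), with $F_1',F_2',L_1',L_2'\le D$ and $E\nleq D$. Here $D$ is connected near $P$, its dual graph is a tree, the only component over $P$ outside $D$ is $E$ and it meets $D$ in two points, and no complete curve lies in $V\setminus D$; yet $F_1'$ has degree three in the dual graph of $D_1'+\varphi^{-1}(P)+D_2'$. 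So connectedness, affineness and the inequality $\#(Z\cap D)\ge2$ cannot force linearity, and your step (b) (which presupposes linearity) does not repair this. The paper does not argue this step from those tools at all: it deduces linearity from the already established assertion (i), i.e.\ from the branch condition at the centers of the blow-ups, which is exactly the extra input your case analysis lacks. Your proposed fallback --- the count of exceptional curves of $\varphi$ omitted from $D$ coming from the $\mathbf{Q}$-homology plane condition --- is a global count and does not localize to $P$ (the omitted curves may lie over other centers of $\varphi$), so it cannot be used to pin down the shape of $\varphi^{-1}(P)$ either.
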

\begin{proof}
It is enough to show the assertions for the case in which
all blow-ups of $\varphi$ are done over $P$.

(i)
By Lemma~\ref{lem:fujita-h-mt} (iv), $\varphi(D)$ is not a point
(cf.\ \cite{mumford}).
Let $D'$ denote the proper transform of $\varphi(D)$ via $\varphi$.
The divisor $T:=\varphi^{-1}(P)$ is an SNC-divisor.
The dual graph of $T$ is a connected tree.
If $P\not\in\varphi(D)$,
then $T\subset V\setminus D$,
which contradicts the fact that $V\setminus D$ is affine.
Thus $P\in\varphi(D)$.
Suppose that there is only one locally irreducible branch of $\varphi(D)$
which passes through $P$.
Then $T\cap D'$ consists of a single point $Q$.

We see $T\ne E$ since $E\setminus D$ is not topologically contractible
by Lemma~\ref{lem:fujita-h-mt}.
Suppose that $T_0:=T-E$ does not intersect $D'$.
Then $D'$ intersects only $E\nleq D$ among the irreducible components of $T$.
Since $D$ is connected,
we have $T_0\subset V\setminus D$,
which is absurd.
Hence $T_0\cap D'=\{Q\}$.
We note that $E$ may pass through $Q$.
If $T_0E=1$, then
$E\cap D=\varnothing$ or $E\cap D=\{\text{one point}\}$,
which contradicts Lemma~\ref{lem:fujita-h-mt}.
Thus $T_0E\ge2$ and $T_0$ is not connected.
Let $T_1$ be a connected component of $T_0$
which does not passes through $Q$.
Then $T_1$ intersects only $E\nleq D$
among the irreducible components of $T-T_1$.
This means that $T_1\subset V\setminus D$,
which is impossible.

(ii)
It follows from (i) that
the dual graph of $D_1'+\varphi^{-1}(P)+D_2'$ is linear.
Suppose that
there exists an irreducible component $E'$ of $\varphi^{-1}(P)-E$
such that $E'\nleq D$.
If $E\cap E'\ne\varnothing$,
then $DE=1$, which contradicts Lemma~\ref{lem:fujita-h-mt}.
Thus $E\cap E'=\varnothing$.
Then $D_1'+\varphi^{-1}(P)+D_2'-E-E'$ is not connected.
Since $D$ is connected,
$V\setminus D$ contains a connected component of
$D_1'+\varphi^{-1}(P)+D_2'-E-E'$, which is impossible.
\end{proof}
\section{Proof of Theorem~\ref{thm1} and Theorem~\ref{thm2}}\label{sec:thm1}
Let $C$ be a rational cuspidal plane curve
and $P_1,\ldots,P_n$ the cusps of $C$.
We will use the fact that
$\mathbf{P}^2\setminus C$ is a $\mathbf{Q}$-homology plane.
Let $\sigma:V\rightarrow\mathbf{P}^2$ be the composition of a shortest sequence
of blow-ups such that the reduced total transform
$D:=\sigma^{-1}(C)$ is an SNC-divisor.
Let $C'$ be the proper transform of $C$.
For each $k$,
the dual graph of $\sigma^{-1}(P_k)+C'$ has the following shape.
\begin{center}
\includegraphics{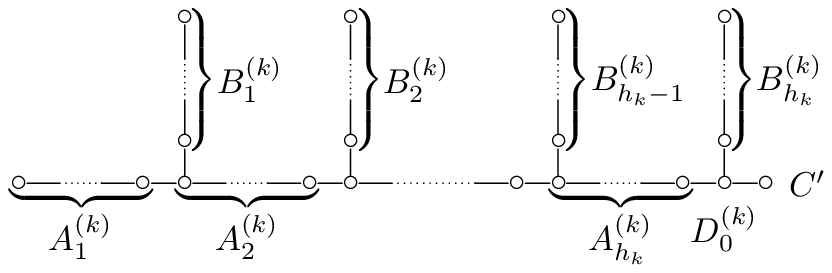}
\end{center}
Here $D_0^{(k)}$ is the exceptional curve of the last blow-up over $P_k$
and $h_k\ge 1$.
By definition, $A_1^{(k)}$ contains
the exceptional curve of the first blow-up over $P_k$.
The morphism $\sigma$ contracts $A_{h_k}^{(k)}+D_0^{(k)}+B_{h_k}^{(k)}$
 to a ($-1$)-curve $E$,
$A_{h_k-1}^{(k)}+E+B_{h_k-1}^{(k)}$ to a ($-1$)-curve,
and so on.
The self-intersection number of every irreducible component of
$A_i^{(k)}$ and $B_i^{(k)}$ is less than $-1$ for each $i$.
See \cite{bk,masa} for detail.

We give the graphs $A_1^{(k)},\ldots,A_{h_k}^{(k)}$
(resp.\ $B_1^{(k)},\ldots,B_{h_k}^{(k)}$)
the direction
from the left-hand side to the right
(resp.\ from the bottom to the top) in the above figure.
We assign each vertex
the self-intersection number of the
corresponding curve as its weight.
With these directions and weights,
we regard $A^{(k)}_i$ and $B^{(k)}_i$ as linear chains.
See Section~\ref{ssec:linear-chains}.

We may assume $\sigma=\sigma^{(n)}\circ\dots\circ\sigma^{(1)}$,
where $\sigma^{(k)}$ is the composition of the blow-ups over $P_k$ of $\sigma$.
There exists a decomposition
$\sigma^{(k)}=\sigma^{(k)}_0\circ\sigma^{(k)}_1\circ\cdots\circ\sigma^{(k)}_{h_k}$
such that $\sigma^{(k)}_i$ contracts $[A^{(k)}_i,1,B^{(k)}_i]$ to a ($-1$)-curve for each $i\ge 1$.
Let $\eta^{(k)}_i$ denote the number of the sprouting blow-ups
of $\sigma^{(k)}_i$ with respect to the ($-1$)-curve.
We will use the following lemma (\cite[Propositon 12]{tono:orevkov}).
\begin{lemma}\label{lem:cres}
We have
$A^{(k)}_i=\TW{\eta^{(k)}_i}\TA{B^{(k)\ast}_i}$,
${A^{(k)\ast}_i}=[B^{(k)}_i,\eta^{(k)}_i+1]$ for each $i,k$.
In particular, $A^{(k)}_i$ contains an irreducible component $E$
such that $E^2<-2$.
\end{lemma}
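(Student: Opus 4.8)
The plan is to reduce the two displayed identities to one of them and then prove that one by unwinding $\sigma^{(k)}_i$ one blow-up at a time. Since $A^{(k)}_i$ and $B^{(k)}_i$ are admissible, Lemma~\ref{lem:adj}(i) with $B^{(k)}_i$ in the role of $A$ and $n=\eta^{(k)}_i$ gives $[B^{(k)}_i,\eta^{(k)}_i+1]^{\ast}=\TW{\eta^{(k)}_i}\TA B^{(k)\ast}_i$; together with $A^{\ast\ast}=A$ from Lemma~\ref{lem:indf}(ii) this shows that the assertions $A^{(k)\ast}_i=[B^{(k)}_i,\eta^{(k)}_i+1]$ and $A^{(k)}_i=\TW{\eta^{(k)}_i}\TA B^{(k)\ast}_i$ are equivalent, so I would prove the former.

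To prove it I would undo the blow-ups composing $\sigma^{(k)}_i$, starting from the $(-1)$-curve onto which $\sigma^{(k)}_i$ contracts $[A^{(k)}_i,1,B^{(k)}_i]$. At each intermediate stage the exceptional curves created so far form a linear chain $[A',F,B']$ with $F$ the current $(-1)$-curve and $A'$, $B'$ (possibly empty) admissible chains, labelled so that $A'$ is the side adjacent to the exceptional curves of the previous blow-ups over $P_k$; thus at the end $A'=A^{(k)}_i$ and $B'=B^{(k)}_i$. Writing $s$ for the number of sprouting blow-ups performed so far, the claim to prove by induction on the number of blow-ups is the invariant $(A')^{\ast}=[B',s+1]$. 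The first blow-up must be sprouting, its center being a smooth point of the initial $(-1)$-curve, and it produces $A'=[2]$, $B'=\varnothing$, $s=1$, consistent with the invariant since $[2]^{\ast}=[2]=[\varnothing,2]$; from then on $A'\neq\varnothing$.

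For the inductive step one classifies the next blow-up, centered at a point of $F$. (a) A sprouting blow-up is possible only when $F$ is an endpoint of the chain, i.e.\ $B'=\varnothing$; it appends a vertex of weight $2$ to $A'$, makes a new $(-1)$-curve, and raises $s$ by one. Since $(A')^{\ast}=[s+1]$ beforehand, the invariant persists because $[A',2]^{\ast}=\TW{1}\TA(A')^{\ast}=\TW{1}\TA[s+1]=[s+2]$ by Lemma~\ref{lem:adj}(i) and the definition of $\TA$. (b) A subdivisional blow-up at the node joining $F$ to the last vertex of $A'$ (so $A'\neq\varnothing$) replaces $A'$ by $A'\TA\TW{1}$ and prepends a vertex of weight $2$ to $B'$, fixing $s$; using Lemma~\ref{lem:adj}(ii) (or a discriminant computation with Lemma~\ref{lem:det1}) one checks $(A'\TA\TW{1})^{\ast}=[2,(A')^{\ast}]$, so $(A')^{\ast}=[B',s+1]$ becomes $[2,B',s+1]=[[2,B'],s+1]$, the invariant for the new chains. (c) A subdivisional blow-up at the node joining $F$ to the first vertex of $B'$ (so $B'\neq\varnothing$) is symmetric: it appends a vertex of weight $2$ to $A'$ and replaces $B'$ by $\TW{1}\TA B'$, and $[A',2]^{\ast}=\TW{1}\TA[B',s+1]=[(\TW{1}\TA B'),s+1]$. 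Running the induction to the end of $\sigma^{(k)}_i$ yields $A^{(k)\ast}_i=[B^{(k)}_i,\eta^{(k)}_i+1]$, hence both identities.

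For the final assertion, admissibility of $A^{(k)\ast}_i$ forces its last entry $\eta^{(k)}_i+1$ to be at least $2$, so $\eta^{(k)}_i\ge1$, and $B^{(k)}_i\neq\varnothing$ because both flanking chains are nonempty at every level of the minimal embedded resolution of a cusp. Writing $B^{(k)\ast}_i=[c_1,\dots,c_t]$ with each $c_j\ge2$, the chain $A^{(k)}_i=\TW{\eta^{(k)}_i}\TA B^{(k)\ast}_i$ has $\eta^{(k)}_i-1$ initial vertices of weight $2$ and then a vertex of weight $c_1+1\ge3$, giving a component $E$ with $E^2\le-3<-2$. The step I expect to be the main obstacle is the geometric bookkeeping behind the induction: verifying that throughout $\sigma^{(k)}_i$ the divisor of exceptional curves built so far is a single linear chain carrying exactly one $(-1)$-curve, that every blow-up of $\sigma^{(k)}_i$ is centered on it (so each step is one of the moves (a)--(c)), and that the $A'$/$B'$ convention is compatible with ``$A^{(k)}_1$ contains the exceptional curve of the first blow-up over $P_k$''; once this is settled, only the routine $\TA$-calculus above remains, and one may alternatively just cite \cite[Proposition 12]{tono:orevkov}.
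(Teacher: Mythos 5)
The paper itself contains no proof of this lemma: it is imported wholesale from \cite[Proposition 12]{tono:orevkov}, which is exactly the alternative you mention in your last sentence. Your proposal therefore takes a genuinely different (self-contained) route, and it is essentially sound. The reduction of the two identities to one via Lemma~\ref{lem:adj}(i) and $A^{\ast\ast}=A$ is correct, and the algebra of your three moves checks out: $[A',2]^{\ast}=\TW{1}\TA(A')^{\ast}$ is Lemma~\ref{lem:adj}(i) with $n=1$, while $(A'\TA\TW{1})^{\ast}=[2,(A')^{\ast}]$ follows from Lemma~\ref{lem:adj}(ii), since passing from $A'=[a_1+1,\ldots,a_r+1]$ to $A'\TA\TW{1}$ replaces $a_r$ by $a_r+1$, hence $\TW{a_r}$ by $[2,\TW{a_r}]$ in the adjoint; I verified the invariant on examples such as the $(2,3)$- and $(2,5)$-cusps and it reproduces $A^{(k)\ast}_i=[B^{(k)}_i,\eta^{(k)}_i+1]$. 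Two points deserve care. First, the chain you track must be the \emph{total transform} of the stage's initial $(-1)$-curve, not just ``the exceptional curves created so far''; your base case $A'=[2]$ shows this is what you intend, and it is also what makes the $A$-side at the end the side attached to the earlier exceptional curves, consistent with the convention that $A^{(k)}_1$ contains the first exceptional curve over $P_k$. Second, the bookkeeping you defer is precisely the needed geometric input, and it is standard: because the cusp is unibranch and $\sigma$ is the minimal embedded resolution, every centre after the first is the unique point of the proper transform of $C$ over $P_k$, which lies on the most recent exceptional curve; and since sprouting blow-ups add leaves and subdivisional ones subdivide edges, vertex degrees never drop, so the fact that the final graph $[A^{(k)}_i,1,B^{(k)}_i]$ is a chain forbids intermediate branch vertices and forces your move (a) to occur only when $B'=\varnothing$. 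Granting, as the paper's setup from \cite{bk,masa} does, that each $B^{(k)}_i\ne\varnothing$ and that the first blow-up of each stage is sprouting (so $\eta^{(k)}_i\ge1$), your deduction of the ``in particular'' clause is also correct. In short, your approach buys a proof internal to this paper at the cost of the resolution-theoretic bookkeeping you flag, whereas the paper simply cites its earlier result.
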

%
%
%
%
\subsection{Proof of Theorem~\ref{thm1}}
Let $K$ be a canonical divisor on $V$.
Let $\omega_k$ (resp.\ $\eta_k$)
denote the number of the subdivisional (resp.\ sprouting) blow-ups
of $\sigma$ over $P_k$, where the blow-up at $P_k$
is regarded as a subdivisional one.
We have $\eta_k=\sum_{i=1}^{h_k}\eta_i^{(k)}$ and
$\omega_k+\eta_k=\text{the number of the blow-ups of $\sigma^{(k)}$}$.
We complete the proof of Theorem~\ref{thm1}
by showing the following proposition.
We note that $\overline{\kappa}=2$ if $n\ge3$ by \cite{wakabayashi}.
\begin{proposition}\label{prop:n}
If $\overline{\kappa}=2$, then
\(
\displaystyle
0\le K(K+D)=7-2n-(C')^2-\sum_{k=1}^{n}\eta_k.
\)
Moreover,
we have $(C')^2\le 7-3n$.
\end{proposition}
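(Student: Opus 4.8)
The plan is to apply Proposition~\ref{prop:gamma2} to $(V,D)$ so as to identify $K(K+D)$ with the bigenus $h^0(V,2K+D)$, and then to compute $K(K+D)$ explicitly by bookkeeping through the blow-ups constituting $\sigma$. Since $V\setminus D=\mathbf{P}^2\setminus C$ is a $\mathbf{Q}$-homology plane and $\overline{\kappa}=2$, the only thing to verify before applying that proposition is (H2). For this I note that the only components of $D$ which can be $(-1)$-curves are $C'$ and the $D_0^{(k)}$, since every component of the chains $A_i^{(k)},B_i^{(k)}$ has self-intersection $\le -2$; now $D_0^{(k)}$ meets exactly the three curves $C'$, the last vertex of $A_{h_k}^{(k)}$ and the last vertex of $B_{h_k}^{(k)}$, so $(D-D_0^{(k)})D_0^{(k)}=3>2$, while if $C'$ is a $(-1)$-curve it meets exactly $D_0^{(1)},\dots,D_0^{(n)}$, so $(D-C')C'=n>2$ in the range $n\ge 3$ relevant to Theorem~\ref{thm1}. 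Hence Proposition~\ref{prop:gamma2} applies and gives $h^0(V,2K+D)=K(K+D)$; in particular $K(K+D)=h^0(V,2K+D)\ge 0$, which is the first assertion.

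For the equality I would go through $\sigma$ blow-up by blow-up. Let $p_j$ be the centre of the $j$-th blow-up, $m_j$ the multiplicity at $p_j$ of the proper transform of $C$, and $\epsilon_j$ the multiplicity at $p_j$ of the reduced exceptional divisor created so far over the cusp that $p_j$ lies over; then the multiplicity at $p_j$ of the reduced total transform of $C$ is $m_j+\epsilon_j$. Writing $E_j$ for the total transform of the $j$-th exceptional curve, the standard blow-up formulae give $K_V=\sigma^*K_{\mathbf{P}^2}+\sum_jE_j$, $\sigma^*C=C'+\sum_jm_jE_j$ and $K_V+D=\sigma^*(K_{\mathbf{P}^2}+C)-\sum_j(m_j+\epsilon_j-2)E_j$. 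Intersecting the last equality with $K_V$ and using $E_j^2=-1$, $E_iE_j=0$ for $i\ne j$, $\sigma^*K_{\mathbf{P}^2}\cdot E_j=0$ and $K_{\mathbf{P}^2}(K_{\mathbf{P}^2}+C)=9-3d$ (with $d=\deg C$), one obtains $K(K+D)=9-3d+\sum_j(m_j+\epsilon_j-2)$. Since $C$ is rational, $\sum_jm_j(m_j-1)=(d-1)(d-2)$, and since $(C')^2=d^2-\sum_jm_j^2$, these combine to $\sum_jm_j=3d-2-(C')^2$; substituting yields
\[
K(K+D)=7-(C')^2+\sum_{j}(\epsilon_j-2).
\]

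What remains is the combinatorial identity $\sum_j(2-\epsilon_j)=2n+\sum_{k=1}^n\eta_k$. Since each centre $p_j$ lies over exactly one cusp $P_k$, it suffices to prove, for every $k$, that $\sum(2-\epsilon_j)=2+\eta_k$ where the sum runs over the blow-ups performed over $P_k$; and here I would use the decomposition $\sigma^{(k)}=\sigma^{(k)}_0\circ\cdots\circ\sigma^{(k)}_{h_k}$. The single blow-up $\sigma^{(k)}_0$ is the one at $P_k$ itself, where $\epsilon=0$, contributing $2$; and for $i\ge1$ the blow-ups of $\sigma^{(k)}_i$ are all performed on an SNC configuration containing the relevant $(-1)$-curve, where such a blow-up contributes $1$ to $\sum(2-\epsilon_j)$ if it is sprouting (its centre being then a smooth point lying on a single exceptional component, so $\epsilon=1$) and $0$ if it is subdivisional (its centre being then a node of two exceptional components, so $\epsilon=2$); the crucial point is that within each $\sigma^{(k)}_i$ the proper transform of $C$ never occurs as a blow-up centre. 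Granting this, $\sigma^{(k)}_i$ contributes exactly $\eta^{(k)}_i$, and summing over $i$ with $\eta_k=\sum_i\eta^{(k)}_i$ gives the required identity. Establishing this local claim — keeping exact track of the multiplicity sequence $(m_j)$ and of which curves pass through each successive centre throughout the reorganised resolution, and justifying the above dichotomy for the blow-ups of $\sigma^{(k)}_i$ by means of Lemma~\ref{lem:cres} — is the step I expect to be the main obstacle.

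Combining the two facts gives $K(K+D)=7-2n-(C')^2-\sum_{k=1}^n\eta_k$, and together with $K(K+D)\ge 0$ this yields $(C')^2\le 7-2n-\sum_k\eta_k$. Finally, for every cusp $P_k$ one has $\eta_k\ge\eta^{(k)}_1\ge 1$: indeed the relation $A^{(k)}_1=\TW{\eta^{(k)}_1}\TA B^{(k)\ast}_1$ of Lemma~\ref{lem:cres} forces $\eta^{(k)}_1\ge 1$, equivalently $A^{(k)}_1$ contains a component of self-intersection $<-2$. Hence $\sum_k\eta_k\ge n$, so $(C')^2\le 7-3n$, as claimed.
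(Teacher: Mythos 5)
Your proposal is correct, and for the central equality it takes a genuinely different route from the paper. The paper does not recompute anything from the multiplicity sequence: it combines the relation $K(K+D)=7-D^2-\sum_{k}(\omega_k+\eta_k)$ (which follows from $K^2=9-\sum_k(\omega_k+\eta_k)$ and $D(K+D)=-2$, $D$ being a connected rational tree) with the formula $D^2=(C')^2-\sum_k(\omega_k-2)$ quoted from \cite[Lemma~4]{masa}, so that the subdivisional counts $\omega_k$ cancel and the stated identity drops out in two lines. Your self-contained computation via $K_V+D=\sigma^{*}(K_{\mathbf{P}^2}+C)-\sum_j(m_j+\epsilon_j-2)E_j$, the genus formula and $(C')^2=d^2-\sum_j m_j^2$ replaces that citation and is correct; the nonnegativity $0\le K(K+D)$ is obtained exactly as in the paper, via Proposition~\ref{prop:gamma2} (your explicit verification of (H2) is a detail the paper merely asserts).

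Two remarks on the points you flag. The ``local claim'' you expect to be the main obstacle is in fact immediate from the definitions and needs neither Lemma~\ref{lem:cres} nor any bookkeeping of the multiplicity sequence: sprouting versus subdivisional for the blow-ups over $P_k$ is measured against the reduced exceptional configuration over $P_k$, which is an SNC divisor at every intermediate stage, so $\epsilon_j=1$ exactly for the sprouting blow-ups, $\epsilon_j=2$ for the subdivisional ones, and $\epsilon_j=0$ only for the initial blow-up at $P_k$, counted as subdivisional by convention; summing gives $\sum(2-\epsilon_j)=2+\eta_k$ cusp by cusp. Your ``crucial point'' that the proper transform of $C$ never occurs as a blow-up centre is actually false (every centre of the minimal embedded resolution lies on it), but it is also irrelevant, since the proper transform plays no role in the sprouting/subdivisional dichotomy. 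Finally, for $\eta_k\ge1$ the paper's argument is simpler and cleaner than your appeal to Lemma~\ref{lem:cres}: the relation $A^{(k)}_1=\TW{\eta^{(k)}_1}\TA B^{(k)\ast}_1$ presupposes $\eta^{(k)}_1\ge1$ (with $\eta^{(k)}_1=0$ the right-hand side is simply undefined) rather than proving it, whereas one sees directly that the second blow-up over $P_k$ has its centre at a smooth point of the unique exceptional curve present at that moment, hence is sprouting; this gives $\sum_k\eta_k\ge n$ and $(C')^2\le7-3n$.
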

\begin{proof}
We have $K(K+D)=7-D^2-\sum_{k=1}^{n}(\omega_k+\eta_k)$.
It was proved in \cite[Lemma~4]{masa} that
$D^2=(C')^2-\sum_{k=1}^{n}(\omega_k-2)$.
Thus we get $K(K+D)=7-2n-(C')^2-\sum_{k=1}^{n}\eta_k$.
The surface $V\setminus D$ is a $\mathbf{Q}$-homology plane
satisfying the conditions (H1), (H2) in Section~\ref{ssec:homology}.
By Proposition~\ref{prop:gamma2}, we have $0\le K(K+D)$.
The second blow-up of $\sigma$ over $P_k$
is a sprouting one for each $k$.
This fact shows the last inequality.
\end{proof}
\subsection{Proof of Theorem~\ref{thm2}}
Assume that $n=3$ and $(C')^2=-2$.
By \cite{wakabayashi}, we have $\overline{\kappa}=2$.
By Lemma~\ref{lem:cres} and Proposition~\ref{prop:n}, we get the following:
\begin{lemma}\label{lem:cr2}
The following assertions hold for each $k$.
\begin{enumerate}
\item
$h_k=1$ and $\eta^{(k)}_1=1$.
\item
$A^{(k)}_1=\TW{1}\TA{B^{(k)\ast}_1}$ and ${A^{(k)\ast}_1}=[B^{(k)}_1,2]$.
\end{enumerate}
\end{lemma}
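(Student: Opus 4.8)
The plan is to deduce Lemma~\ref{lem:cr2} from Proposition~\ref{prop:n} and Lemma~\ref{lem:cres} by a short counting argument whose arithmetic is pinned down by the extremal hypotheses $n=3$, $(C')^2=-2$. The first ingredient is the inequality $\eta^{(k)}_i\ge1$ for every $i$ and $k$. By Lemma~\ref{lem:cres} we have ${A^{(k)\ast}_i}=[B^{(k)}_i,\eta^{(k)}_i+1]$; since every irreducible component of $A^{(k)}_i$ has self-intersection at most $-2$, the chain $A^{(k)}_i$ is admissible, hence so is its adjoint ${A^{(k)\ast}_i}$, which forces the last entry $\eta^{(k)}_i+1$ of that chain to be at least $2$. (Equivalently, the chain $\TW{\eta^{(k)}_i}=[2_{\eta^{(k)}_i}]$ occurring in $A^{(k)}_i=\TW{\eta^{(k)}_i}\TA{B^{(k)\ast}_i}$ must be non-empty.) Summing over the levels of the resolution of $P_k$ gives $\eta_k=\sum_{i=1}^{h_k}\eta^{(k)}_i\ge h_k\ge1$ for each $k$.

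Next I would feed $n=3$, $(C')^2=-2$ and $\overline{\kappa}=2$ (the last holding by \cite{wakabayashi}, as $n=3$) into Proposition~\ref{prop:n}, obtaining
\[
0\le K(K+D)=7-2\cdot3-(-2)-\sum_{k=1}^{3}\eta_k=3-\sum_{k=1}^{3}\eta_k,
\]
so $\sum_{k=1}^{3}\eta_k\le3$. Combining the two bounds, $3=\sum_{k=1}^{3}1\le\sum_{k=1}^{3}h_k\le\sum_{k=1}^{3}\eta_k\le3$, whence every inequality is an equality: $h_k=1$ and $\eta_k=\eta^{(k)}_1=1$ for each $k$, which is assertion (i). Assertion (ii) then follows at once from Lemma~\ref{lem:cres} with $\eta^{(k)}_1=1$: namely $A^{(k)}_1=\TW{1}\TA{B^{(k)\ast}_1}$ and ${A^{(k)\ast}_1}=[B^{(k)}_1,\eta^{(k)}_1+1]=[B^{(k)}_1,2]$.

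The argument is short and essentially mechanical; the one point that requires care is the uniform lower bound $\eta^{(k)}_i\ge1$, i.e.\ the fact that each level of the minimal embedded resolution of a cusp involves at least one sprouting blow-up. This is precisely what the adjoint identity ${A^{(k)\ast}_i}=[B^{(k)}_i,\eta^{(k)}_i+1]$ of Lemma~\ref{lem:cres} delivers, together with admissibility of the chains $A^{(k)}_i$, so I do not anticipate a genuine obstacle. Everything else is bookkeeping with the identity $K(K+D)=7-2n-(C')^2-\sum_k\eta_k$ of Proposition~\ref{prop:n} and with the explicit shape of $A^{(k)}_1$ in Lemma~\ref{lem:cres}.
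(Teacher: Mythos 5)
Your argument is correct and is essentially the paper's own route: the paper states Lemma~\ref{lem:cr2} as an immediate consequence of Lemma~\ref{lem:cres} and Proposition~\ref{prop:n}, and your counting ($\sum_k\eta_k\le 3$ from $0\le K(K+D)$, together with $\eta^{(k)}_i\ge1$ for every level, forced here by admissibility of $A^{(k)}_i$ and the adjoint identity of Lemma~\ref{lem:cres}) is exactly the intended deduction. The only cosmetic difference is that the paper justifies the lower bound on $\eta_k$ by noting that the second blow-up over each cusp is sprouting, whereas you extract $\eta^{(k)}_i\ge1$ from ${A^{(k)\ast}_i}=[B^{(k)}_i,\eta^{(k)}_i+1]$; both are fine.
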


Let $\sigma':V\rightarrow V'$ be the contraction of
$D^{(2)}_0$ and $D^{(3)}_0$.
Since $\sigma'(C')^2=0$,
there exists a $\mathbf{P}^1$-fibration $p':V'\rightarrow\mathbf{P}^1$
such that $\sigma'(C')$ is a nonsingular fiber.
Put $p=p'\circ\sigma':V\rightarrow\mathbf{P}^1$.
On $V$, there are two irreducible components of
$A^{(k)}_{1}+B^{(k)}_{1}$ meeting with $D^{(k)}_0$
for each $k$.
One of them must be a ($-2$)-curve and the other must not.
Let $D^{(k)}_1$ be the ($-2$)-curve and let $D^{(k)}_2$ be the remaining one.
Put $S_1=D^{(1)}_0$, $S_2=D^{(2)}_2$, $S_3=D^{(3)}_2$, $S_4=D^{(2)}_1$
and $S_5=D^{(3)}_1$.
The curves $S_1,\ldots,S_5$ are 1-sections of $p$.
Namely, the intersection number of them and a fiber is equal to one.
The divisor $D$ contains no other sections of $p$.
The divisor $F_0':=C'+D_0^{(2)}+D_0^{(3)}$
is a singular fiber of $p$.

The surface $X=V\setminus D$ is a $\mathbf{Q}$-homology plane.
A general fiber of $p|_{X}$ is isomorphic to a curve
$\mathbf{C}^{(4\ast)}=\mathbf{P}^1\setminus\{5\text{ points}\}$.
Cf.\ \cite{mits:noncomplete}.
There exists a birational morphism $\varphi:V\rightarrow\Sigma_d$
from $V$ onto the Hirzebruch surface $\Sigma_d$ for some $d\ge 0$.
The morphism $\varphi$ is the composition of the successive contractions
of the ($-1$)-curves in the singular fibers of $p$,
and $p\circ\varphi^{-1}:\Sigma_d\rightarrow\mathbf{P}^1$ is a
$\mathbf{P}^1$-bundle.

\begin{lemma}\label{lem:s1}
We may assume that
$\varphi(S_1+S_2+S_3)$ is smooth.
The following assertions hold.
\begin{enumerate}
\item
$\varphi(S_1)\sim \varphi(S_2)\sim \varphi(S_3)$ (linearly equivalent),
$d=\varphi(S_1)^2=0$.
\item
We have $\varphi(F_0')=\varphi(C')$.
The fiber $\varphi(F_0')$ passes through
$\varphi(S_2)\cap \varphi(S_4)$
and
$\varphi(S_3)\cap \varphi(S_5)$.
\item
$\varphi$ contains exactly one blow-up over $\varphi(S_1)$.
The set
$\varphi(S_1)\cap \varphi(S_4)\cap \varphi(S_5)$ consists of a single point,
which coincides with the center of the blow-up.
\item
$\varphi(S_4)^2=\varphi(S_5)^2=\varphi(S_4)\varphi(S_5)=2$,
$\varphi(S_4)\varphi(S_1)=\varphi(S_5)\varphi(S_1)=1$.
\end{enumerate}
\end{lemma}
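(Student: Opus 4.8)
The plan is to analyze the $\mathbf{P}^1$-fibration $p:V\to\mathbf{P}^1$ together with the five $1$-sections $S_1,\dots,S_5$ and to push everything down to the Hirzebruch surface $\Sigma_d$ via $\varphi$, keeping careful track of the combinatorics forced by Lemma~\ref{lem:cr2}. First I would record that $\varphi$ contracts only $(-1)$-curves lying in the singular fibers of $p$, so the sections $S_i$ are never contracted and the intersection numbers $S_iS_j$ can only increase under $\varphi$; also $\varphi(F_0')$ is a fiber of the $\mathbf{P}^1$-bundle $p\circ\varphi^{-1}$. The structure of $F_0'=C'+D_0^{(2)}+D_0^{(3)}$ as a singular fiber with $\sigma'(C')^2=0$ tells me that $\varphi$ performs exactly one blow-up inside this fiber over the image $\varphi(C')=\varphi(F_0')$ to produce the chain $D_0^{(2)}$--$C'$--$D_0^{(3)}$; this already yields assertion~(ii), namely $\varphi(F_0')=\varphi(C')$, and that $\varphi(C')$ meets $\varphi(S_2),\varphi(S_4)$ resp.\ $\varphi(S_3),\varphi(S_5)$ at the two points where $D_0^{(2)}$ and $D_0^{(3)}$ sat.

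Next I would identify, for $k=2,3$, the local picture around $D_0^{(k)}$. By Lemma~\ref{lem:cr2} we have $h_k=1$, $\eta_1^{(k)}=1$, and $A_1^{(k)}=t_1\TA B_1^{(k)\ast}$, $A_1^{(k)\ast}=[B_1^{(k)},2]$; in particular exactly one of the two components of $A_1^{(k)}+B_1^{(k)}$ adjacent to $D_0^{(k)}$ is a $(-2)$-curve, which is how $D_1^{(k)}$ (the $(-2)$-curve, i.e.\ $S_4$ or $S_5$) and $D_2^{(k)}$ ($=S_2$ or $S_3$) were defined. The key observation is that the blow-up producing $D_0^{(k)}$ is the unique sprouting one over $P_k$ in its tower; tracing which branches this node separates shows that on $\Sigma_d$ the images $\varphi(S_2),\varphi(S_3)$ become disjoint from the ``$(-2)$-side'' and the whole of $\sigma^{(k)}$ collapses so that $D_0^{(k)}$, $S_1=D_0^{(1)}$, and $D_1^{(k)}$ come together. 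That is the content of assertion~(iii): $\varphi$ contains exactly one blow-up over $\varphi(S_1)$, and $\varphi(S_1)\cap\varphi(S_4)\cap\varphi(S_5)$ is the single point that is its center. I would verify this by a careful count of the $(-1)$-curves appearing in the singular fibers other than $\varphi(F_0')$ and by using that $D$ contains no sections of $p$ besides $S_1,\dots,S_5$, so there is no room for additional blow-ups away from these sections.

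For assertion~(iv) I would compute self-intersections by bookkeeping under $\varphi$. On $V$, $S_1=D_0^{(1)}$ has $S_1^2=-h_1-1=-2$ by the shape of $\sigma^{-1}(P_1)$ and Lemma~\ref{lem:cr2}(i); similarly $S_4=D_1^{(2)}$ and $S_5=D_1^{(3)}$ are $(-2)$-curves by construction. Each blow-up of $\varphi$ over $\varphi(S_1)$ (there is exactly one, meeting all three of $S_1,S_4,S_5$ by~(iii)) raises each of $\varphi(S_1)^2,\varphi(S_4)^2,\varphi(S_5)^2$ by~$1$ relative to its value further up, and raises $\varphi(S_1)\varphi(S_4)$ and $\varphi(S_1)\varphi(S_5)$ and $\varphi(S_4)\varphi(S_5)$ each by~$1$; combining this with the fact that $S_1S_4=S_1S_5=S_4S_5=0$ on $V$ (they lie in distinct towers and meet $D_0^{(k)}$-chains only) and with $S_i^2=-2$ on $V$, I get $\varphi(S_4)^2=\varphi(S_5)^2=\varphi(S_4)\varphi(S_5)=2$ and $\varphi(S_4)\varphi(S_1)=\varphi(S_5)\varphi(S_1)=1$... wait, that needs two blow-ups, so in fact the count in~(iii) must be re-read as ``exactly one blow-up over the point $\varphi(S_1)\cap\varphi(S_4)\cap\varphi(S_5)$ per tower,'' i.e.\ two in total, one from $\sigma^{(2)}$ and one from $\sigma^{(3)}$; I would make this precise and then the arithmetic closes. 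Finally, since $\varphi(S_1)^2$ is then forced and $\varphi(S_2),\varphi(S_3)$ are disjoint sections of a $\mathbf{P}^1$-bundle, the surface must be $\Sigma_0$ with $\varphi(S_1)\sim\varphi(S_2)\sim\varphi(S_3)$ of self-intersection $0$; and the initial ``we may assume $\varphi(S_1+S_2+S_3)$ smooth'' is arranged by choosing $\varphi$ to contract, inside each singular fiber, the chain of $(-1)$-curves on the side that would otherwise introduce a singular point of $\varphi(S_2)$ or $\varphi(S_3)$.

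The main obstacle I anticipate is getting the blow-up count in~(iii)--(iv) exactly right: one must rule out any extra sprouting blow-ups affecting $S_1$ and pin down precisely how many blow-ups of $\varphi$ lie over the triple point, since this is what distinguishes ``$\varphi(S_4)^2=2$'' from other values and ultimately is what forces the quartic. This is where Lemma~\ref{lem:cr2} (the consequence of $(C')^2=-2$) does the real work, via the identities $A_1^{(k)}=t_1\TA B_1^{(k)\ast}$ and $\eta_1^{(k)}=1$, which leave essentially no freedom in the shape of $\sigma^{(k)}$; the rest is a matter of organizing the fiber-by-fiber contraction of $p$ so that $S_2,S_3$ stay smooth and no spurious section appears.
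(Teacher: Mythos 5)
There is a genuine gap, and it sits exactly where you yourself hesitated. You take $S_1^2=-2$ ("$S_1^2=-h_1-1$"), but $S_1=D_0^{(1)}$ is the exceptional curve of the \emph{last} blow-up over $P_1$, so $S_1^2=-1$ on $V$. This wrong value is what forces you to "re-read" assertion (iii) as \emph{two} blow-ups over $\varphi(S_1)$ (one per tower), which contradicts the very statement you are proving and destroys the conclusion that $\varphi(S_1)\cap\varphi(S_4)\cap\varphi(S_5)$ is a single point. The paper's argument for (iii)--(iv) is different and not recoverable by your bookkeeping: after (i) one knows $d=0$ and $\varphi(S_1)^2=0$, so writing $\varepsilon_i=\varphi(S_i)\varphi(S_1)$ for $i=4,5$ one has $\varphi(S_i)\sim\varphi(S_1)+\varepsilon_i\,(\text{fiber})$, hence $\varphi(S_i)^2=2\varepsilon_i$ and $\varphi(S_4)\varphi(S_5)=\varepsilon_4+\varepsilon_5$; the fact that $\varphi$ contracts $D_0^{(2)}+D_0^{(3)}$ gives $\varepsilon_i>0$, and $S_1^2=-1$ together with $\varphi(S_1)^2=0$ gives exactly one blow-up over $\varphi(S_1)$, which pins $\varepsilon_4=\varepsilon_5=1$ and the common triple point. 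Your alternative route to (iv), raising $\varphi(S_4)^2$ blow-up by blow-up from $S_4^2=-2$, is moreover circular: it needs to know \emph{all} blow-ups of $\varphi$ over $\varphi(S_4)$, which is the content of Lemma~\ref{lem:f1}(i), proved afterwards and using the present lemma.

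Two smaller points. The reduction "we may assume $\varphi(S_1+S_2+S_3)$ is smooth" is not something you can dispatch by "choosing the side to contract"; in the paper it is a citation to a prior result (Lemma~17 of \cite{tono:bck2}), and your sketch of it is not an argument. And for (ii) you should actually exclude the possibility that $\varphi$ contracts $C'$ inside $F_0'$: the paper does this by observing that contracting $C'$ would force $\varphi(S_1+S_2+S_3)$ to be singular, which is precisely where the smoothness assumption is used; also note that $\varphi$ contracts two components ($D_0^{(2)}$ and $D_0^{(3)}$) of $F_0'$, not one as you wrote.
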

\begin{proof}
By \cite[Lemma~17]{tono:bck2},
we may assume that
$\varphi(S_1+S_2+S_3)$ is smooth.
We have $\varphi(S_1)\sim\varphi(S_2)\sim\varphi(S_3)$
and $\varphi(S_1)^2=0$.
If $\varphi$ contracts $C'$,
then $\varphi(S_1+S_2+S_3)$ must be singular.
Thus $\varphi(F_0')=\varphi(C')$
and $\varphi$ contracts $D_0^{(2)}+D_0^{(3)}$.
Hence $\varphi(F_0')$ passes through
$\varphi(S_2)\cap \varphi(S_4)$ and $\varphi(S_3)\cap \varphi(S_5)$.

For $i=4,5$,
put $\varepsilon_i=\varphi(S_i)\varphi(S_1)$.
We have $\varphi(S_i)^2=2\varepsilon_i$
because $\varphi(S_i)\sim \varphi(S_1)+(\text{a fiber of }p\circ\varphi^{-1})\varepsilon_i$.
Since $\varphi(S_i)$ intersects $\varphi(S_{i-2})$,
we see $\varepsilon_i>0$.
Because $S_1^2=-1$,
$\varphi$ contains exactly one blow-up over $\varphi(S_1)$.
This means that $\varepsilon_4=\varepsilon_5=1$ and that
$\varphi(S_1)\cap\varphi(S_4)\cap\varphi(S_5)\ne\varnothing$.
We have $\varphi(S_4)\varphi(S_5)=\varepsilon_4+\varepsilon_5=2$.
The remaining assertions are clear.
\end{proof}

We use Lemma \ref{lem:s1} to show the following:

\begin{lemma}\label{lem:f1}
The following assertions hold.
\begin{enumerate}
\item
For $i=4,5$,
$\varphi$ contains exactly four blow-ups over $\varphi(S_i)$.
The centers of the blow-ups must be
the points of intersection of $\varphi(S_i)$
and the other sections $\varphi(S_j)$ ($j\ne i$).
\item
If a fiber $F$ of $p\circ\varphi^{-1}$
intersects $\varphi(S_1+\cdots+S_5)$ in five points,
then the proper transform $F'$ of $F$ via $\varphi$
is not a component of $D$ and
intersects $D^{(1)}_0$, $D^{(2)}_1$ and $D^{(3)}_1$.
\end{enumerate}
\end{lemma}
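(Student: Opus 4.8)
The plan is to pin down the birational morphism $\varphi$ precisely enough near the sections $\varphi(S_1),\varphi(S_4),\varphi(S_5)$ by combining the self-intersection drops under $\varphi$ with the intersection numbers on $\Sigma_d=\Sigma_0$ furnished by Lemma~\ref{lem:s1}, and then to read off both assertions.

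\textbf{Part (i).} I first record the intersection numbers on $\Sigma_0$. By Lemma~\ref{lem:s1}(i) the classes of $\varphi(S_1),\varphi(S_2),\varphi(S_3)$ coincide, so these three curves are mutually disjoint; and $\varphi(S_4),\varphi(S_5)\sim\varphi(S_1)+f$ with $f$ a fibre of $p\circ\varphi^{-1}$, whence $\varphi(S_i)\varphi(S_j)=1$ for $i\in\{4,5\}$, $j\in\{1,2,3\}$ and $\varphi(S_4)\varphi(S_5)=2$. Thus $\varphi(S_i)\bigl(\sum_{j\ne i}\varphi(S_j)\bigr)=5$ for $i=4,5$, while on $V$ the curves $S_1,\dots,S_5$ are pairwise disjoint ($S_1$ and the exceptional curves over $P_2,P_3$ lie in pieces of the connected SNC divisor $D$ joined only through $C'$, and $D_1^{(k)},D_2^{(k)}$ sit on opposite sides of $D_0^{(k)}$). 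Hence the blow-ups of $\varphi$ centered on the successive proper transforms of $\varphi(S_i)$ must drop this intersection number from $5$ to $0$. A blow-up at a point $x$ of such a proper transform drops it by exactly the number of (proper transforms of) the other sections passing through $x$; and no point of such a proper transform lies on three or more of the other sections — three of them would include two of the mutually disjoint $\varphi(S_1),\varphi(S_2),\varphi(S_3)$ — while the only one lying on exactly two is the triple point $Q=\varphi(S_1)\cap\varphi(S_4)\cap\varphi(S_5)$ of Lemma~\ref{lem:s1}(iii). Therefore at least $5-1=4$ blow-ups over $\varphi(S_i)$ are needed, each at a point on some other section. On the other hand $\varphi(S_i)^2=2$ by Lemma~\ref{lem:s1}(iv) while $S_i^2=-2$ (it is a $(-2)$-curve), and each blow-up lowers by one the self-intersection of the proper transform of a curve through its center; so $\varphi$ has exactly $2-(-2)=4$ blow-ups over $\varphi(S_i)$. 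Combining the two counts, there are exactly four, all centered at intersection points of $\varphi(S_i)$ with the other sections.

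\textbf{Part (ii).} Let $F$ be a fibre of $p\circ\varphi^{-1}$ meeting $\varphi(S_1+\dots+S_5)$ in five distinct points. Then $F$ avoids every point where two of the five sections meet — in particular $Q$, $\varphi(D_0^{(2)})=\varphi(C')\cap\varphi(S_2)\cap\varphi(S_4)$, $\varphi(D_0^{(3)})=\varphi(C')\cap\varphi(S_3)\cap\varphi(S_5)$ — otherwise $F$ would meet fewer than five sections. By part (i) (for $S_4,S_5$) and Lemma~\ref{lem:s1}(iii) (for $S_1$), every blow-up of $\varphi$ whose center lies on $\varphi(S_1)$, $\varphi(S_4)$ or $\varphi(S_5)$ is centered at a point where that section meets another section; hence none of these centers lies on $F$, and $\varphi$ is an isomorphism near $F\cap\varphi(S_1)$, $F\cap\varphi(S_4)$, $F\cap\varphi(S_5)$. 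Therefore the proper transform $F'$ of $F$ meets $S_1=D_0^{(1)}$, $S_4=D_1^{(2)}$ and $S_5=D_1^{(3)}$. Finally $F'\nleq D$: it meets the three curves $D_0^{(1)},D_1^{(2)},D_1^{(3)}$, yet no component of $D$ is adjacent to two of them (the $D$-neighbours of $D_0^{(1)}$ are $C'$ and the extremities of $A_1^{(1)},B_1^{(1)}$; those of $D_1^{(k)}$ are $D_0^{(k)}$ and the next component of its chain; and none of these coincide for distinct $k$), so $F'$ is none of the components of $D$.

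\textbf{Main obstacle.} The delicate point is the counting in Part (i): one must verify that, at every stage of the blow-up process realizing $\varphi$, a proper transform of $\varphi(S_i)$ ($i=4,5$) never acquires a point lying on three or more other sections and that $Q$ is the unique one lying on two, so that the four blow-ups forced by the self-intersection drop coincide with the four forced by the intersection-number drop and none is wasted at a point off the sections. The adjacency bookkeeping for $F'\nleq D$ in Part (ii) is the other point needing care, but it is routine.
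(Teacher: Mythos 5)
Your strategy is the paper's own: the count of exactly four blow-ups over $\varphi(S_i)$ comes from $\varphi(S_i)^2-S_i^2=2-(-2)=4$, the location of the centers from the fact that $S_i$ meets no other section on $V$ while $\varphi(S_i)\sum_{j\ne i}\varphi(S_j)=5$ on $\Sigma_0$, and part (ii) is then read off as in the paper (your explicit adjacency check for $F'\nleq D$ is just a spelled-out version of the paper's appeal to the absence of loops in the dual graph of $D$). The one genuine gap is precisely the step you flag as the ``main obstacle'' and then assert without proof: that $Q=\varphi(S_1)\cap\varphi(S_4)\cap\varphi(S_5)$ is the \emph{only} point of $\varphi(S_i)$ ($i=4,5$) lying on two of the other sections. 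Lemma~\ref{lem:s1}(iii) does not give this: a priori the second point of $\varphi(S_4)\cap\varphi(S_5)$ could also lie on $\varphi(S_2)$ (or on $\varphi(S_3)$). If that happened, your bookkeeping no longer forces the conclusion: with two centers each lying on two other sections, the required total drop of $5$ could be realized as $2+2+1+0$, so one of the four blow-ups could have its center off the other sections, and the second assertion of (i) --- on which your part (ii) crucially relies --- would not follow.

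The missing claim can be supplied from Lemma~\ref{lem:s1}(ii), which you did not use (and which the paper implicitly invokes by citing Lemma~\ref{lem:s1} as a whole). The fiber $\varphi(F_0')=\varphi(C')$ passes through $\varphi(S_2)\cap\varphi(S_4)$ and through $\varphi(S_3)\cap\varphi(S_5)$; since $\varphi(S_5)$ is a section, it meets this fiber only at the single point $\varphi(S_3)\cap\varphi(S_5)$, which is distinct from $\varphi(S_2)\cap\varphi(S_4)$ because $\varphi(S_2)\cap\varphi(S_3)=\varnothing$. Hence $\varphi(S_5)$ avoids $\varphi(S_2)\cap\varphi(S_4)$, and symmetrically $\varphi(S_4)$ avoids $\varphi(S_3)\cap\varphi(S_5)$; together with the disjointness of $\varphi(S_1),\varphi(S_2),\varphi(S_3)$ and $\varphi(S_1)\cap\varphi(S_4)=\varphi(S_1)\cap\varphi(S_5)=\{Q\}$, this shows $Q$ is the unique point of $\varphi(S_i)$ on two other sections (and at infinitely near stages no new such point can appear, since pairwise intersections of the proper transforms only decrease and over $Q$ only $\varphi(S_5)$, resp.\ $\varphi(S_4)$, can still meet the proper transform of $\varphi(S_4)$, resp.\ $\varphi(S_5)$). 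With this inserted, your drop pattern is forced to be $2+1+1+1$, and the rest of your argument, for both (i) and (ii), goes through and coincides with the paper's proof.
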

\begin{proof}
The first assertion of (i) follows from the fact that
$S_i^2=-2$ and $\varphi(S_i)^2=2$.
The second follows from Lemma~\ref{lem:s1}
and the fact that $S_i$ does not intersect the other sections
on $V$.
By (i) and Lemma~\ref{lem:s1} (iii),
$\varphi$ does not perform blow-ups over $F\cap\varphi(S_1+S_4+S_5)$.
This means that $F'$
intersects $D^{(1)}_0$, $D^{(2)}_1$ and $D^{(3)}_1$.
Since the dual graph of $D$ contains no loops,
$F'$ is not a component of $D$.
\end{proof}

By Lemma~\ref{lem:s1},
$\varphi(S_2)\cap\varphi(S_5)$ and $\varphi(S_3)\cap\varphi(S_4)$
consist of a single point, respectively.
\begin{lemma}\label{lem:f2}
The two points $\varphi(S_2)\cap\varphi(S_5)$,
$\varphi(S_3)\cap\varphi(S_4)$ are on a single fiber of $p\circ\varphi^{-1}$.
\end{lemma}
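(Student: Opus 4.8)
The plan is to pass to the Hirzebruch surface through $\varphi$, turn the assertion into a numerical identity between the two square-$2$ sections, and then force that identity from the structure of the singular fibres of $p$. By Lemma~\ref{lem:s1}, on $\Sigma_0=\mathbf{P}^1\times\mathbf{P}^1$ (with $p\circ\varphi^{-1}$ the first projection) the sections $\varphi(S_1),\varphi(S_2),\varphi(S_3)$ are mutually disjoint sections of square $0$, hence fibres of the second projection, while $\varphi(S_4),\varphi(S_5)$ are sections of square $2$; each of the latter meets each of $\varphi(S_1),\varphi(S_2),\varphi(S_3)$ transversally once, and they meet one another in the two distinct points $Q_0:=\varphi(S_1)\cap\varphi(S_4)\cap\varphi(S_5)$ and a second point $Q_{45}$. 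Choosing coordinates so that $\varphi(S_1)=\mathbf{P}^1\times\{\infty\}$, $\varphi(S_2)=\mathbf{P}^1\times\{0\}$, $\varphi(S_3)=\mathbf{P}^1\times\{1\}$ and so that $Q_0$ lies over $\infty$ in the base, $\varphi(S_4)$ and $\varphi(S_5)$ become the graphs of maps $s\mapsto\alpha s+\beta$ and $s\mapsto\gamma s+\delta$ with $\alpha\gamma\ne0$. Since by Lemma~\ref{lem:s1}(ii) the fibre $\varphi(F_0')=\varphi(C')$ passes through $\varphi(S_2)\cap\varphi(S_4)$ and $\varphi(S_3)\cap\varphi(S_5)$, one gets $-\beta/\alpha=(1-\delta)/\gamma$, i.e. $\alpha\delta-\beta\gamma=\alpha$; and since $\varphi(S_2)\cap\varphi(S_5)$ lies over $-\delta/\gamma$ and $\varphi(S_3)\cap\varphi(S_4)$ over $(1-\beta)/\alpha$, the assertion reads $\alpha\delta-\beta\gamma=-\gamma$. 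Combining the two identities, the whole lemma is equivalent to $\alpha=-\gamma$; computing the second coordinate of $Q_{45}$ to be $\alpha/(\alpha-\gamma)$, this is equivalent to $Q_{45}$ lying at level $1/2$, i.e. to the level of $\varphi(S_1)$ and the level of $Q_{45}$ being harmonic conjugates with respect to the levels of $\varphi(S_2),\varphi(S_3)$; equivalently, the involution of the ruling that interchanges $\varphi(S_4),\varphi(S_5)$ must also interchange $\varphi(S_2),\varphi(S_3)$.

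To prove $\alpha=-\gamma$ I would argue by contradiction, using that $\Gamma(D)$ is a tree and $V\setminus D$ contains no topologically contractible curve (Lemma~\ref{lem:fujita-h-mt}(i),(iii),(v)). Since $S_2,S_5$ (resp. $S_3,S_4$) are disjoint on $V$ and lie in the two distinct connected arms of $D$ attached to $C'$ at $D_0^{(2)},D_0^{(3)}$, the morphism $\varphi$ must blow up at $\varphi(S_2)\cap\varphi(S_5)$ (resp. $\varphi(S_3)\cap\varphi(S_4)$); as the two sections there meet transversally (their intersection number is $1$), Lemma~\ref{lem:phi}(ii) applied to the blow-ups of $\varphi$ over that point identifies the shape of the exceptional divisor and shows its last component is a $(-1)$-curve $E$ (resp. $E'$) meeting $S_2,S_5$ (resp. $S_3,S_4$) with $E\nleq D$ (resp. $E'\nleq D$), for otherwise $\Gamma(D)$ would carry a loop joining the two arms through $C'$. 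On the other hand, by Lemma~\ref{lem:f1}(i) the blow-ups of $\varphi$ over $\varphi(S_4)$ (resp. $\varphi(S_5)$) are exactly one at each of $Q_0,Q_{45},\varphi(S_4)\cap\varphi(S_2),\varphi(S_4)\cap\varphi(S_3)$ (resp. with $S_5$), and by Lemma~\ref{lem:s1}(ii),(iii) the fibre $\varphi(F_0')$ receives exactly the two blow-ups producing $D_0^{(2)},D_0^{(3)}$ and the fibre over $Q_0$ the single blow-up there. I would then locate every component of $D$ lying in fibres of $p$: besides $C',D_0^{(2)},D_0^{(3)}\le F_0'$, the chains $A_1^{(k)},B_1^{(k)}$ must be distributed among the remaining singular fibres. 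If $\varphi(S_2)\cap\varphi(S_5)$ and $\varphi(S_3)\cap\varphi(S_4)$ lay over distinct points, the two fibres carrying $E,E'$ would each be made up of $(-1)$-curves meeting $D$ in too few points and joining different arms, leaving no room for the chains and no way to avoid a contractible piece of $V\setminus D$; this contradiction forces the two points onto one fibre, which is the assertion.

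The main obstacle is this last bookkeeping. A priori the chains $A_1^{(k)},B_1^{(k)}$ can be arbitrarily long, so one cannot simply count singular fibres; one has to use Lemma~\ref{lem:cr2}(i),(ii) — which gives $h_k=1$, $\eta^{(k)}_1=1$ and $A_1^{(k)}=\TW{1}\TA B_1^{(k)\ast}$, $A_1^{(k)\ast}=[B_1^{(k)},2]$ — together with Lemma~\ref{lem:cres} and repeated use of Lemma~\ref{lem:phi} to control precisely how each chain attaches to the sections $S_i$ and into which fibre it can sit. It is exactly the combination of Lemma~\ref{lem:f1} (fixing the four blow-up centres over $\varphi(S_4)$ and $\varphi(S_5)$) with the rigidity of the chain data that I expect removes the apparent freedom and pins down $\varphi(S_2)\cap\varphi(S_5)$ and $\varphi(S_3)\cap\varphi(S_4)$ to the same fibre; carrying that argument out cleanly, rather than by an unilluminating case list, is the part that will require the most care.
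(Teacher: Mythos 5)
Your first paragraph (the coordinate computation on $\Sigma_0$ reducing the lemma to $\alpha=-\gamma$, i.e.\ to a harmonicity statement) is correct but is only a restatement of the claim; it does no work toward proving it. The actual argument is your second paragraph, and there the decisive step is missing: the assertion that if the two points lay over distinct base points then ``the two fibres carrying $E,E'$ would each be made up of $(-1)$-curves meeting $D$ in too few points \dots leaving no room for the chains and no way to avoid a contractible piece of $V\setminus D$'' is never substantiated, and you yourself concede in the last paragraph that carrying out this bookkeeping is the part still to be done. There is also a logical slip in the step you do spell out: Lemma~\ref{lem:phi}~(ii) \emph{assumes} $E\nleq D$ (condition (2)); it cannot be invoked to \emph{conclude} that the last exceptional curve over $\varphi(S_2)\cap\varphi(S_5)$ is not a component of $D$. (One can show that \emph{some} component of the exceptional locus over that point avoids $D$, since otherwise $\Gamma(D)$ has a loop through $C'$, but that weaker statement does not by itself feed your fibre count.) Finally, the tools you expect to need -- Lemma~\ref{lem:cr2} and the rigidity of the chains $A^{(k)}_1,B^{(k)}_1$ -- are not what closes the argument; the lemma is proved in the paper before any such chain data is used.

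The missing idea is a count of vertical components of $D$ meeting the section $S_1=D_0^{(1)}$, not an analysis of the fibres through $\varphi(S_2)\cap\varphi(S_5)$ and $\varphi(S_3)\cap\varphi(S_4)$ themselves. Three components of $D-D^{(1)}_0$ meet $D^{(1)}_0$ (namely $C'$ and the two ends of $A^{(1)}_1+B^{(1)}_1$), and all of them lie in fibres of $p$. Suppose the two points lie on distinct fibres $F_2,F_3$. Since the only blow-up over $\varphi(S_1)$ is at the triple point (Lemma~\ref{lem:s1}~(iii)) and the blow-ups over $\varphi(S_4),\varphi(S_5)$ occur only at their intersections with the other sections (Lemma~\ref{lem:f1}~(i)), the proper transforms $F_2',F_3'$ meet $S_1$ and also $D^{(2)}_1=S_4$, resp.\ $D^{(3)}_1=S_5$; as $\Gamma(D)$ is a tree, neither can be a component of $D$. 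If $\varphi(S_4)\cap\varphi(S_5)$ is a single point, Lemma~\ref{lem:f1}~(ii) leaves only $\varphi(F_0')$ and the fibre through that point as fibres whose proper transforms can lie in $D$, so at most two vertical components of $D$ meet $D^{(1)}_0$ -- contradiction. If it consists of two points, the proper transform of the exceptional curve over the triple point meets $S_1,S_4,S_5$, hence is not in $D$, and again at most two vertical components of $D$ can meet $D^{(1)}_0$ -- contradiction. This is the concrete counting your sketch gestures at but does not perform, and without it the proposal does not prove the lemma.
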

\begin{proof}
Suppose the contrary.
Let $F_2$ (resp.\ $F_3$)
denote the fiber of $p\circ\varphi^{-1}$ passing through
$\varphi(S_2)\cap\varphi(S_5)$
(resp.\ $\varphi(S_3)\cap\varphi(S_4)$).
Let $F_i'$ denote the proper transform of $F_i$ via $\varphi$ for $i=2,3$.
By Lemma~\ref{lem:s1} (iii),
$F_i'$ intersects $S_1$.
It follows from Lemma~\ref{lem:s1} and Lemma~\ref{lem:f1} (i) that
$F_i'$ intersects $D^{(i)}_1$.
Since the dual graph of $D$ does not contain loops,
$F_i'$ is not a component of $D$.

Suppose that $\varphi(S_4)\cap\varphi(S_5)$ consists of one point.
Let $F$ be the fiber of $p\circ\varphi^{-1}$ passing through
$\varphi(S_4)\cap\varphi(S_5)$.
By Lemma~\ref{lem:f1} (ii),
the proper transform via $\varphi$
of every fiber of $p\circ\varphi^{-1}$ other than
$F$ and $\varphi(F_0')$ is not a component of $D$.
This contradicts the fact that
there are three irreducible components
of $D-D^{(1)}_0$ meeting with $D^{(1)}_0$,
each of which is contained in a fiber of $p$.
Hence $\varphi(S_4)\cap\varphi(S_5)$ consists of two points.

Let $E$ be the exceptional curve of the blow-up of $\varphi$ at
$\varphi(S_1)\cap\varphi(S_4)\cap\varphi(S_5)$.
By Lemma~\ref{lem:s1} (iii) and Lemma~\ref{lem:f1} (i),
the proper transform $E'$ of $E$ by $\varphi$ intersects
$S_1$, $S_4$ and $S_5$.
Thus $E'$ is not a component of $D$.
By the same argument as above,
there are at most two components of $D-D^{(1)}_0$ meeting with $D^{(1)}_0$,
which is absurd.
\end{proof}

By Lemma~\ref{lem:s1} (iv),
the set $\varphi(S_4)\cap\varphi(S_5)$ consists of one or two points.
\begin{lemma}\label{lem:type1}
If $\varphi(S_4)\cap\varphi(S_5)$ consists of two points,
then $\deg C=4$.
\end{lemma}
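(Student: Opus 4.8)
The plan is to analyze the configuration of the five sections and the two singular fibers of $p\circ\varphi^{-1}$ on $\Sigma_d=\Sigma_0=\mathbf{P}^1\times\mathbf{P}^1$ when $\varphi(S_4)\cap\varphi(S_5)$ consists of two points, reconstruct $\varphi$ completely, and then read off $\deg C$ from the total transform. First I would record what is already forced: by Lemma~\ref{lem:s1} we have $\varphi(S_1)\sim\varphi(S_2)\sim\varphi(S_3)$ of square $0$, $\varphi(S_4)^2=\varphi(S_5)^2=2$, and $\varphi(S_i)\varphi(S_1)=1$ for $i=4,5$; by Lemma~\ref{lem:f1}(i) the four blow-ups over $\varphi(S_4)$ (and the four over $\varphi(S_5)$) are centered exactly at the intersection points with the other four sections. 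With $\varphi(S_4)\cap\varphi(S_5)$ now two points, together with Lemma~\ref{lem:f2} these two points lie on one fiber; Lemma~\ref{lem:s1}(iii) puts the single blow-up over $\varphi(S_1)$ at $\varphi(S_1)\cap\varphi(S_4)\cap\varphi(S_5)$, which must be one of those two points, so both points of $\varphi(S_4)\cap\varphi(S_5)$ lie on the fiber through $\varphi(S_1)\cap\varphi(S_4)\cap\varphi(S_5)$.

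Next I would pin down the remaining intersection points. The section $\varphi(S_2)$ meets $\varphi(S_4)$ once (giving the fiber $\varphi(F_0')$ through $\varphi(S_2)\cap\varphi(S_4)$, by Lemma~\ref{lem:s1}(ii)) and meets $\varphi(S_5)$ once (by Lemma~\ref{lem:f2}, on the common fiber $F_{23}$ with $\varphi(S_3)\cap\varphi(S_4)$); symmetrically for $\varphi(S_3)$. So on $\Sigma_0$ there are exactly three relevant fibers: $\varphi(F_0')=\varphi(C')$, the fiber $F_{45}$ carrying both points of $\varphi(S_4)\cap\varphi(S_5)$ and the blow-up center on $\varphi(S_1)$, and the fiber $F_{23}$ carrying $\varphi(S_2)\cap\varphi(S_5)$ and $\varphi(S_3)\cap\varphi(S_4)$. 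Counting: $\varphi$ performs $1$ blow-up over $\varphi(S_1)$ and $4$ over each of $\varphi(S_4),\varphi(S_5)$, but the center on $\varphi(S_1)$ coincides with one of the centers on $\varphi(S_4)$ and one on $\varphi(S_5)$; so the total number of blow-ups of $\varphi$ is $4+4-1=7$, all lying over the three fibers above. I would then trace $\sigma$ and $\varphi$ together: since $C'=\sigma'(C')$ pulled back to $\mathbf{P}^2$ recovers $C$, and $\varphi(C')=\varphi(F_0')$ is a fiber of the ruling on $\Sigma_0$, the class of $C'$ on $V$ determines $(C')^2$ and the number of blow-ups of $\sigma$ over each cusp; combining $\deg C=(C')^2+\sum\omega_k$ from Proposition~\ref{prop:n} (via $D^2=(C')^2-\sum(\omega_k-2)$) with the $\omega_k,\eta_k$ data extracted from Lemma~\ref{lem:cr2} and the fiber analysis pins $\deg C$ down numerically.

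Concretely, I expect the bookkeeping to yield each cusp contributing a very short resolution (consistent with Lemma~\ref{lem:cr2}: $h_k=1$, $\eta_1^{(k)}=1$, so $A_1^{(k)}=\TW{1}\TA B_1^{(k)\ast}$ and $A_1^{(k)\ast}=[B_1^{(k)},2]$), and the total of $7$ blow-ups distributing as $3+2+2$ over the three cusps — exactly the resolution pattern of the tricuspidal quartic, whose cusps are all ordinary ($(2,3)$-cusps). Matching the resulting weighted dual graph of $D$ against the known graph of the quartic, or simply evaluating $\deg C=(C')^2+\sum_{k=1}^3\omega_k=-2+6=4$, finishes the lemma. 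The main obstacle will be the third step: carefully verifying that the intersection pattern forced by Lemmas~\ref{lem:s1}--\ref{lem:f2} is rigid enough that $\varphi$ is determined up to isomorphism — in particular ruling out spurious extra blow-ups in the fiber $F_{23}$ or a different placement of the $\varphi(S_2)\cap\varphi(S_5)$ and $\varphi(S_3)\cap\varphi(S_4)$ points — and then translating the reconstructed $\varphi$ into the count $\omega_k$ for each $k$ without sign or off-by-one errors. The fiber-by-fiber argument of Lemma~\ref{lem:f2} (proper transforms of "generic" fibers must not become components of $D$, and exactly three components of $D-D_0^{(1)}$ meet $D_0^{(1)}$) is the right engine to make this rigidity precise.
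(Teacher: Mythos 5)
Your plan follows the paper's general strategy (reconstruct the blow-up structure of $\varphi$ from the section--fiber configuration on $\Sigma_0$ and read off the cusp data), but as written it contains two concrete errors and leaves the actual proof undone. First, the configuration you set up is impossible: you claim, citing Lemma~\ref{lem:f2}, that both points of $\varphi(S_4)\cap\varphi(S_5)$ lie on one fiber $F_{45}$. Lemma~\ref{lem:f2} is about the points $\varphi(S_2)\cap\varphi(S_5)$ and $\varphi(S_3)\cap\varphi(S_4)$, not about $\varphi(S_4)\cap\varphi(S_5)$; and since $\varphi(S_4)$, $\varphi(S_5)$ are sections of the ruling (they meet every fiber exactly once), the two points of $\varphi(S_4)\cap\varphi(S_5)$ necessarily lie on two \emph{distinct} fibers. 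So the correct picture has blow-ups over four fibers (the fiber $\varphi(F_0')$, the fiber through the triple point $\varphi(S_1)\cap\varphi(S_4)\cap\varphi(S_5)$, the fiber through the second point of $\varphi(S_4)\cap\varphi(S_5)$, and the fiber through $\varphi(S_2)\cap\varphi(S_5)$ and $\varphi(S_3)\cap\varphi(S_4)$), not three, and your count of $7$ blow-ups is also off: there are $8$ ($=\rho(V)-2$), because blow-ups are moreover needed at the points where $\varphi(S_2)$ and $\varphi(S_3)$ meet the fiber through the second point of $\varphi(S_4)\cap\varphi(S_5)$ --- centers that are not intersections of two sections, so they are not captured by Lemma~\ref{lem:f1}~(i).

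Second, the closing formula $\deg C=(C')^2+\sum_k\omega_k$ is not in the paper and is false in general (for the curve $x^d=y^{d-1}z$ of the introduction it would force $\omega_1=0$, and for the cuspidal cubic it gives $5$, not $3$); it happens to output $4$ here only by numerical coincidence. The identities you cite, $K(K+D)=7-2n-(C')^2-\sum_k\eta_k$ and $D^2=(C')^2-\sum_k(\omega_k-2)$, do not involve $\deg C$ at all, so no such shortcut exists. What the paper actually proves is that $\sigma^{-1}(P_k)=D_0^{(k)}+D_1^{(k)}+D_2^{(k)}$ for every $k$, i.e.\ each cusp is an ordinary $(2,3)$-cusp; then $\deg C=4$ follows from $(C')^2=-2$ (so $d^2=-2+3\cdot 6=16$), or equivalently from the genus formula. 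Establishing that each exceptional divisor has exactly three components is precisely the ``rigidity'' step you defer as the main obstacle: it requires the fiber-by-fiber case analysis (using Lemma~\ref{lem:s1}~(iii), Lemma~\ref{lem:f1} and Lemma~\ref{lem:phi} to show which exceptional curves and fiber transforms are, and are not, components of $D$, and that each $D_1^{(k)},D_2^{(k)}$ meets only $D_0^{(k)}$), and that analysis constitutes essentially the whole proof. As it stands, your proposal is an outline resting on a misstated configuration and an invalid degree formula, so it does not yet prove the lemma.
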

\begin{proof}
We show $\sigma^{-1}(P_k)=D^{(k)}_0+D^{(k)}_1+D^{(k)}_2$ for each $k$.
Put
$\{Q_1\}=\varphi(S_1)\cap\varphi(S_5)$,
$\{Q_2\}=\varphi(S_2)\cap\varphi(S_5)$  and
$\{Q_3\}=\varphi(S_4)\cap\varphi(S_5)\setminus\{Q_1\}$.
For each $i$,
let $F_i$ be the fiber of $p\circ\varphi^{-1}$ passing through $Q_i$.
Put $\{Q_4\}=\varphi(S_3)\cap\varphi(S_4)$.
The fiber $F_2$ passes through $Q_4$ by Lemma~\ref{lem:f2}.
Since $S_1,\ldots,S_5$ do not intersect each other,
$\varphi$ performs blow-ups at all $Q_i$'s.
Let $E_i$ be the exceptional curve of the blow-up at $Q_i$.
We sometimes use the same symbols to denote the proper transforms
of $E_i$, $\varphi(S_i)$, etc.\ via blow-ups.
Let $F_i'$ denote the proper transform of $F_i$ via $\varphi$.

By Lemma~\ref{lem:s1} (iii) and Lemma~\ref{lem:f1} (i),
$\varphi$ does not perform blow-ups at
$E_1\cap\varphi(S_1)$, $E_1\cap\varphi(S_4)$ and $E_1\cap\varphi(S_5)$
after the blow-up at $Q_1$.
This means that $E_1$ intersects $S_1$, $S_4$ and $S_5$ on $V$.
Thus $E_1$ is not a component of $D$.
It follows from Lemma~\ref{lem:phi} (i) that
$\varphi$ does not perform blow-ups over $E_1$.
By Lemma~\ref{lem:f1} (ii),
the proper transforms via $\varphi$ of
the fibers of $p\circ\varphi^{-1}$
other than $\varphi(F_0')$, $F_1$, $F_2$ and $F_3$
are not contained in $D$.
By Lemma~\ref{lem:s1} (iii), $\varphi$ does not perform blow-ups over
$\varphi(S_1)\setminus\{Q_1\}$.
Thus $F_2'$ and $F_3'$ are contained in $D$ and intersect $S_1=D_0^{(1)}$.
It follows that $F_2+F_3$ coincides with
the image of $D^{(1)}_1+D^{(1)}_2$ under $\varphi$.
We have $(F_3')^2<-2$
since $\varphi$ performs blow-ups
at $Q_3$, $\varphi(S_2)\cap F_3$ and $\varphi(S_3)\cap F_3$.
Hence $F_2'=D^{(1)}_1$ and $F_3'=D^{(1)}_2$.

Since $(F_2')^2=-2$,
$\varphi$ only performs two blow-ups over $F_2$.
The centers coincide with $Q_2$ and $Q_4$.
The curve $E_4$ is not a component of $D$
because it intersects $S_4=D^{(2)}_1$ and $F_2'=D^{(1)}_1$ on $V$.
By Lemma~\ref{lem:phi} (i),
$\varphi$ does not perform blow-ups over $E_4$.
Similarly, $E_2$ is not a component of $D$
and $\varphi$ does not perform blow-ups over $E_2$.
It follows that $D^{(1)}_1$ intersects only
$D^{(1)}_0$ among the components of $D-D^{(1)}_1$.
Hence $(F_3')^2=(D^{(1)}_2)^2=-3$.

We see that
$\varphi$ does not perform blow-ups over
$F_3\setminus(\varphi(S_2+S_3)\cup\{Q_3\})$.
By Lemma~\ref{lem:f1} (i),
$\varphi$ does not perform blow-ups at
$E_3\cap\varphi(S_4)$ and $E_3\cap\varphi(S_5)$
after the blow-up at $Q_3$.
Thus $E_3$ intersects $F_3'$, $S_4$ and $S_5$ on $V$.
Hence $E_3$ is not a component of $D$.
It follows that $S_{i+2}=D^{(i)}_1$ intersects only
$D^{(i)}_0$ among the components of $D-D^{(i)}_1$ for $i=2,3$.
Hence $S_i^2=(D^{(i)}_2)^2=-3$.

For $i=2,3$,
let $E_{3,i}$ be the exceptional curve of the blow-up at $F_3\cap\varphi(S_i)$.
Since $S_i^2=(F_3')^2=-3$,
$\varphi$ does not perform blow-ups at
$E_{3,i}\cap\varphi(S_i)$ and $E_{3,i}\cap F_3$
after the blow-up at $F_3\cap\varphi(S_i)$.
This means that $E_{3,i}$ intersects $F_3'=D^{(1)}_2$ and $S_i=D^{(i)}_2$
on $V$.
Thus $E_{3,i}$ is not a component of $D$.
Hence $D^{(1)}_2$ intersects only
$D^{(1)}_0$ among the components of $D-D^{(1)}_2$.
Since $S_i^2=-3$,
$\varphi$ does not perform blow-ups at
$F_1\cap\varphi(S_2)$ and $F_1\cap\varphi(S_3)$.
Thus $F_1'$ is not a component of $D$.
Hence $D^{(i)}_2$ intersects only
$D^{(i)}_0$ among the components of $D-D^{(i)}_2$.
\end{proof}

From now on,
we assume that
$\varphi(S_4)\cap\varphi(S_5)$ consists of one point.
We prove that the assumption causes a contradiction.
Let $F_1$ (resp.\ $F_2$) be the fiber of $p\circ\varphi^{-1}$
passing through $\varphi(S_4)\cap\varphi(S_5)$
(resp.\ $\varphi(S_4)\cap\varphi(S_3)$).
For each $i$, put $T_i=\varphi^{-1}(F_i)$.
Let $b_i$ be the number of the irreducible components
of $T_i$ which are not contained in $D$.
\begin{lemma}\label{lem:b}
The fibration $p$ has exactly three singular fibers $F_0',T_1,T_2$.
We have $b_1+b_2=6$.
\end{lemma}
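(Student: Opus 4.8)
The plan is to prove this in two stages: first reduce the identity $b_1+b_2=6$ to the assertion that $p$ has exactly three singular fibres, and then establish that assertion.

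\textbf{Euler-characteristic reduction.} Write $e(\,\cdot\,)$ for the topological Euler number and $|T|$ for the number of irreducible components of a fibre $T$. Since $X=V\setminus D$ is a $\mathbf{Q}$-homology plane, $e(X)=1$, and since $D$ is a tree of rational curves, $e(D)=\#\{\text{components of }D\}+1$; hence $e(V)=e(X)+e(D)=\#\{\text{components of }D\}+2$. On the other hand $\varphi$ presents $V$ as a blow-up of the $\mathbf{P}^1$-bundle $\Sigma_d$, so $e(V)=e(\Sigma_d)+\#\{\text{blow-ups of }\varphi\}=4+\#\{\text{blow-ups of }\varphi\}$; thus $\#\{\text{components of }D\}=2+\#\{\text{blow-ups of }\varphi\}$. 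Now suppose $p$ has exactly the three singular fibres $F_0',T_1,T_2$. No component of $D$ has self-intersection $0$, so no smooth fibre of $p$ is a component of $D$; hence the components of $D$ contained in fibres are exactly the three components of $F_0'=C'+D_0^{(2)}+D_0^{(3)}$ (all of which lie in $D$) together with the $|T_1|-b_1$ components of $T_1$ and the $|T_2|-b_2$ components of $T_2$ that lie in $D$. Counting also the five sections $S_1,\dots,S_5$ gives $\#\{\text{components of }D\}=5+3+(|T_1|-b_1)+(|T_2|-b_2)$, while $\#\{\text{blow-ups of }\varphi\}=2+(|T_1|-1)+(|T_2|-1)$ (two blow-ups over $\varphi(F_0')$, since $\varphi$ contracts $D_0^{(2)}+D_0^{(3)}$, and $|T_i|-1$ over the fibre $\varphi(T_i)$). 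Substituting these into the displayed identity yields $b_1+b_2=6$.

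\textbf{Exactly three singular fibres.} That there are at least three is clear: $\varphi$ blows up over $\varphi(F_0')$ (it contracts $D_0^{(2)}$ to $\varphi(S_2)\cap\varphi(S_4)\in\varphi(F_0')$ by Lemma~\ref{lem:s1}(ii)), over $F_1$ (it blows up $Q_0:=\varphi(S_1)\cap\varphi(S_4)\cap\varphi(S_5)\in F_1$ by Lemma~\ref{lem:s1}(iii)), and over $F_2$ (it separates $\varphi(S_3)$ from $\varphi(S_4)$ at a point of $F_2$), and these three fibres are distinct because the section $\varphi(S_4)$ meets them in three different points. For the opposite inequality I would show that every blow-up centre of $\varphi$ lies over one of $\varphi(F_0'),F_1,F_2$. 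By Lemma~\ref{lem:s1}(i), $\varphi(S_1),\varphi(S_2),\varphi(S_3)$ are mutually disjoint of square $0$, and by hypothesis $\varphi(S_4)\cap\varphi(S_5)=\{Q_0\}$; combining this with Lemmas~\ref{lem:s1}(ii) and \ref{lem:f2} shows that all the pairwise intersection points of $\varphi(S_1),\dots,\varphi(S_5)$ lie on $\varphi(F_0'),F_1,F_2$, so every other fibre meets $\varphi(S_1+\dots+S_5)$ in five distinct points. Lemma~\ref{lem:f1}(i) then places every blow-up centre of $\varphi$ on $\varphi(S_4)$ or $\varphi(S_5)$ over these three fibres, and Lemma~\ref{lem:s1}(iii) does the same for $\varphi(S_1)$. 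Suppose $\varphi$ had a further blow-up, necessarily over a fibre $F$ meeting the five sections in five distinct points. By Lemma~\ref{lem:f1}(ii) the proper transform of $F$ is not contained in $D$; analysing the exceptional curves of $\varphi$ over $F$ by means of Lemma~\ref{lem:phi}, together with the facts that $D$ is a tree and that $D_0^{(2)},D_0^{(3)}$ are its only $(-1)$-curves lying in fibres, one is forced to exhibit an exceptional curve $E\nleq D$ meeting $D$ in at most one point, so that $E\setminus D$ is a topologically contractible curve in $X$, contradicting Lemma~\ref{lem:fujita-h-mt}(v). The blow-up centres lying on $\varphi(S_2)=\varphi(D_2^{(2)})$ or $\varphi(S_3)=\varphi(D_2^{(3)})$ are treated the same way, using $\varphi(S_i)^2=0$ and the explicit shape of the resolution chains $A_1^{(k)},B_1^{(k)}$ from Lemmas~\ref{lem:cr2} and \ref{lem:cres} to identify which exceptional curve over the offending fibre cannot lie in $D$.

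\textbf{Main obstacle.} The hard part is this last step: ruling out any blow-up of $\varphi$ over a fibre that meets the five sections in five distinct points, and in particular controlling the blow-up centres on the sections $S_2=D_2^{(2)}$ and $S_3=D_2^{(3)}$, which — unlike $S_4,S_5$ — are not forced by Lemma~\ref{lem:f1}(i) to lie at mutual intersection points of the sections. Making this precise calls for a careful simultaneous use of the combinatorics of the chains $A_1^{(k)},B_1^{(k)}$, the connectedness and tree structure of $D$, and the non-existence of topologically contractible curves in $X$; this is where most of the work of the proof is concentrated.
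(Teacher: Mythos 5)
Your second step (the count $b_1+b_2=6$ once one knows there are exactly three singular fibers) is correct and is essentially the paper's own argument: the paper counts $\rho(V)=r(D)$ and blow-ups of $\varphi$, which is the same bookkeeping as your Euler-characteristic version, since $e(V)=\rho(V)+2$ here. The problem is the first step. You correctly reduce it to showing that $\varphi$ performs no blow-up over a fiber $F$ of $p\circ\varphi^{-1}$ other than $\varphi(F_0'),F_1,F_2$, and you correctly locate, via Lemma~\ref{lem:s1} (i)--(iii), Lemma~\ref{lem:f2} and Lemma~\ref{lem:f1} (i), that any center of such a blow-up would lie on $F\setminus\varphi(S_1+S_4+S_5)$. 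But at exactly this point you stop: you say one ``is forced to exhibit an exceptional curve $E\nleq D$ meeting $D$ in at most one point'' and defer the argument to an unspecified analysis of the chains $A_1^{(k)},B_1^{(k)}$, declaring it the main obstacle. As written this is a genuine gap --- in particular you never rule out the possibility that the relevant exceptional curves over such a center all lie in $D$, which is precisely what blocks a direct appeal to Lemma~\ref{lem:phi}.

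The missing observation that closes the gap quickly (and is what makes the paper's one-line citation of Lemmas~\ref{lem:f1}, \ref{lem:f2}, \ref{lem:phi} legitimate) is the following. Let $P\in F$ be a center of a blow-up of $\varphi$ and let $E$ be the exceptional curve of the \emph{last} blow-up over $P$; then no later center of $\varphi$ lies on $E$, so $E^2=-1$ on $V$ and $E$ is contained in the fiber $\varphi^{-1}(F)\ne F_0'$. Since the only $(-1)$-curves in $D$ are $D_0^{(1)},D_0^{(2)},D_0^{(3)}$, of which $D_0^{(1)}=S_1$ is a section and $D_0^{(2)},D_0^{(3)}$ lie in $F_0'$, we must have $E\nleq D$. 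Hence Lemma~\ref{lem:phi} (i) applies and forces at least two locally irreducible branches of $\varphi(D)$ through $P$. But every component of $D$ is a section or lies in a fiber, the proper transform of $F$ is not in $D$ by Lemma~\ref{lem:f1} (ii), fiber components of $D$ lying over other fibers do not meet $F$, and $P\notin\varphi(S_1+S_4+S_5)$ while $\varphi(S_2),\varphi(S_3)$ are disjoint; so at most one branch of $\varphi(D)$ passes through $P$ --- a contradiction. There is no need for the delicate case analysis on $\varphi(S_2),\varphi(S_3)$ or for the combinatorics of $A_1^{(k)},B_1^{(k)}$ that you anticipate; with this observation your outline becomes a complete proof along the paper's lines.
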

\begin{proof}
It follows from Lemma~\ref{lem:f1},
Lemma~\ref{lem:f2} and Lemma~\ref{lem:phi} that
$p$ has exactly three singular fibers $F_0',T_1,T_2$.
Let $\rho(V)$ denote the Picard number of $V$
and $r(D)$ the number of the irreducible components of $D$.
We have $\rho(V)=r(D)$.
The number of the blow-ups of $\varphi$ is equal to
$r(D)+b_1+b_2-\text{(the number of the sections in $D$)}-\text{(the number of the singular fibers)}$.
Thus $r(D)=\rho(V)=\rho(\Sigma_d)+r(D)+b_1+b_2-8$.
\end{proof}

\begin{figure}[t]
\begin{center}
\includegraphics{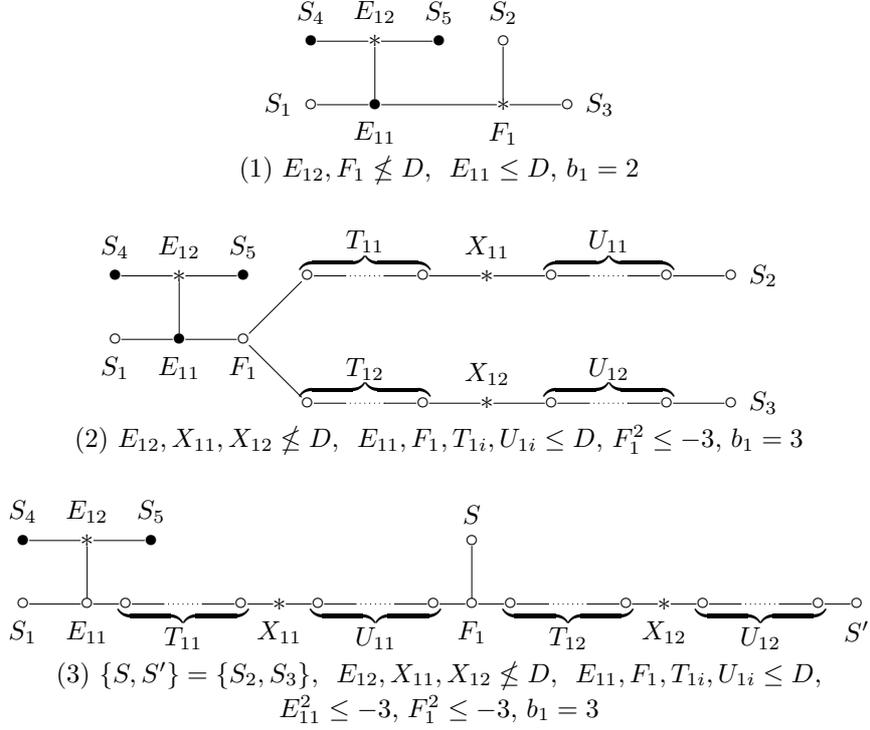}
\caption{The weighted dual graph of $T_1+S_1+\cdots+S_5$}
\label{fig:t1}
\end{center}
\end{figure}
\begin{figure}
\begin{center}
\includegraphics{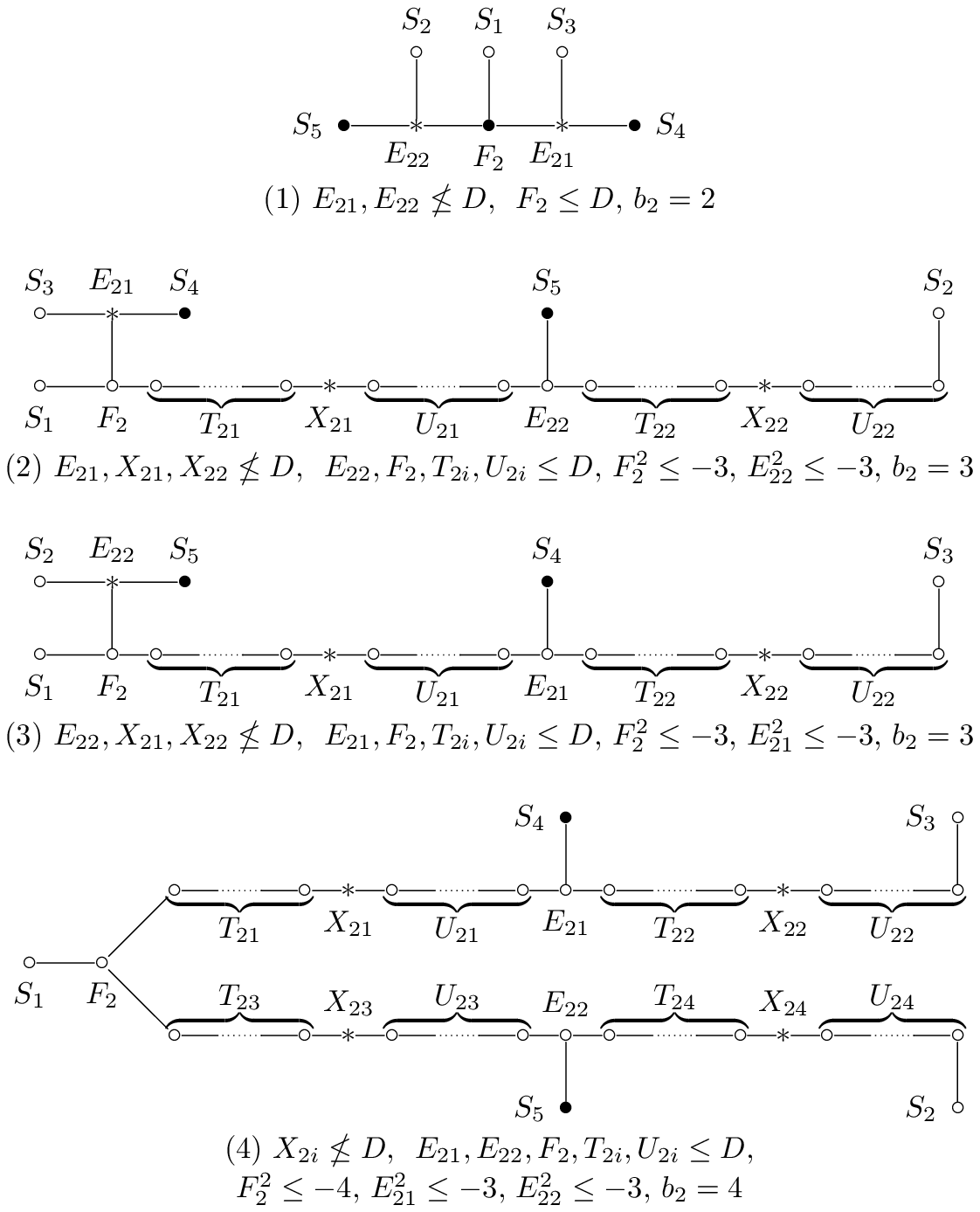}
\caption{The weighted dual graph of $T_2+S_1+\cdots+S_5$}
\label{fig:t2}
\end{center}
\end{figure}
\begin{figure}
\begin{center}
\includegraphics{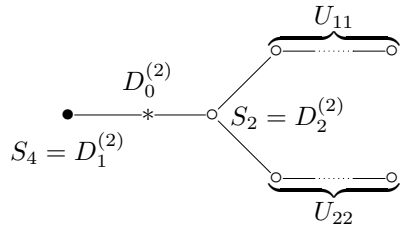}
\caption{The weighted dual graph of $\sigma^{-1}(P_2)$}
\label{fig:d}
\end{center}
\end{figure}

The next lemma describes the structure of the fibration $p$.

\begin{lemma}\label{lem:t}
We may assume that
the weighted dual graph of
$T_1+S_1+\cdots+S_5$
(resp.\ $T_2+S_1+\cdots+S_5$)
coincides with that in Figure~\ref{fig:t1} (2)
(resp.\ Figure~\ref{fig:t2} (2)).
The weighted dual graph of $\sigma^{-1}(P_2)$
coincides with that in Figure~\ref{fig:d}.
In the figures,
$\ast$ (resp.\ $\bullet$) denotes a ($-1$)-curve
(resp.\ ($-2$)-curve).
\end{lemma}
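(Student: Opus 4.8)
We are in the case where $\varphi(S_4)\cap\varphi(S_5)$ is a single point, and the plan is to reconstruct the contraction $\varphi\colon V\to\Sigma_0=\mathbf{P}^1\times\mathbf{P}^1$ explicitly over each of its three singular fibres $\varphi(F_0')$, $F_1$, $F_2$, starting from the configuration of the five sections on $\Sigma_0$ recorded in Lemmas~\ref{lem:s1}--\ref{lem:f2}. That configuration is: $\varphi(S_1),\varphi(S_2),\varphi(S_3)$ are three pairwise disjoint fibres of the second ruling (each of self-intersection $0$); $\varphi(S_4),\varphi(S_5)$ are sections with $\varphi(S_4)^2=\varphi(S_5)^2=2$ which, by the case assumption, meet only at the triple point $\varphi(S_1)\cap\varphi(S_4)\cap\varphi(S_5)$ on $F_1$, with intersection multiplicity $\varphi(S_4)\varphi(S_5)=2$, and which meet $\varphi(S_2),\varphi(S_3)$ transversally at the remaining four points, two lying on $\varphi(F_0')$ and two on $F_2$ (Lemma~\ref{lem:f2}).

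First I would pin down the blow-ups of $\varphi$ over these fibres. By Lemma~\ref{lem:s1}(iii) there is exactly one blow-up over $\varphi(S_1)$, at the triple point, and by Lemma~\ref{lem:f1}(i) the four blow-ups over $\varphi(S_i)$ ($i=4,5$) have their centres at the intersection points of $\varphi(S_i)$ with the remaining sections; since $\varphi(S_4),\varphi(S_5)$ are simply tangent at the triple point, two of these four account for the resolution of that tangency, one produces $D_0^{(2)}$ (resp.\ $D_0^{(3)}$) on $\varphi(F_0')$, and one lies on $F_2$. Next I would run the blow-ups over $\varphi(S_2),\varphi(S_3)$ -- whose centres are no longer constrained to section--section intersections -- controlling their length and shape by the weights $(D_2^{(2)})^2,(D_2^{(3)})^2$ of $S_2,S_3$ and by the relations $A_1^{(k)}=\TW{1}\TA B_1^{(k)\ast}$, $A_1^{(k)\ast}=[B_1^{(k)},2]$ of Lemma~\ref{lem:cr2}. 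Throughout I would use that $\Gamma(D)$ is a tree (Lemma~\ref{lem:fujita-h-mt}(iii)), that $V\setminus D$ is affine and contains no topologically contractible curve (Lemma~\ref{lem:fujita-h-mt}(i),(v)), that Lemma~\ref{lem:phi}(i) forbids $\varphi$ from performing a blow-up whose image point lies on a single branch of $\varphi(D)$, and that $\sigma^{-1}(P_k)=D_0^{(k)}+A_1^{(k)}+B_1^{(k)}$ with $A_1^{(k)},B_1^{(k)}$ twigs attached to $D_0^{(k)}$, so that a curve meeting two of the pairwise disjoint divisors $\sigma^{-1}(P_1),\sigma^{-1}(P_2),\sigma^{-1}(P_3)$ cannot belong to $D$. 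This determines, vertex by vertex and with explicit self-intersection numbers, the weighted dual graphs of $T_1+S_1+\dots+S_5$ and $T_2+S_1+\dots+S_5$, and then -- by locating $D_0^{(2)}$ as the exceptional curve over $\varphi(S_2)\cap\varphi(S_4)$, identifying $D_1^{(2)}=S_4$ and $D_2^{(2)}=S_2$, and tracing the chains $B_1^{(2)},A_1^{(2)}$ through the components of $T_1,T_2$ lying in $D$ -- the graph of $\sigma^{-1}(P_2)$.

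The external input that makes the argument converge is the identity $b_1+b_2=6$ of Lemma~\ref{lem:b}: the purely local analysis above leaves only finitely many candidate blow-up patterns over $F_1$ and $F_2$, and computing, in each, how many components of $T_1,T_2$ fail to lie in $D$ kills every pattern but the ones drawn in Figures~\ref{fig:t1} (2), \ref{fig:t2} (2) and~\ref{fig:d}. The one remaining freedom is the interchange of $P_2$ with $P_3$ (hence of $S_2\leftrightarrow S_3$ and $S_4\leftrightarrow S_5$, with $F_1$ fixed), and fixing this choice is exactly what ``we may assume'' records.

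The main obstacle is this case analysis itself: tracking the infinitely near centres on $\varphi(S_2),\varphi(S_3),\varphi(S_4),\varphi(S_5)$, deciding for each exceptional curve produced whether it belongs to $D$, and verifying that the count $b_1+b_2=6$ together with the adjoint relations of Lemma~\ref{lem:cr2} really does isolate a single configuration. By contrast, reading the $\Sigma_0$-picture off the earlier lemmas and translating a fixed $\varphi$-tower back into the dual graphs of $T_1,T_2$ and $\sigma^{-1}(P_2)$ are routine.
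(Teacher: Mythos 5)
Your proposal is correct and takes essentially the same route as the paper: it too works from the $\Sigma_0$-configuration of the five sections given by Lemmas~\ref{lem:s1}--\ref{lem:f2}, enumerates the finitely many admissible blow-up patterns of $\varphi$ over $F_1$ and $F_2$ using Lemma~\ref{lem:phi}, Lemma~\ref{lem:f1} and the tree/affineness constraints on $D$, and then eliminates all but one pattern, up to interchanging $P_2$ and $P_3$, by combining the count $b_1+b_2=6$ of Lemma~\ref{lem:b} with the relations of Lemma~\ref{lem:cr2}. The only remark is that the case analysis you defer as the ``main obstacle'' is precisely the body of the paper's proof: it lists three possible graphs for $T_1$ and four for $T_2$, uses $b_1+b_2=6$ to reduce to five combined types, and rules out types (1--4), (3--2), (3--3) via Lemma~\ref{lem:cr2} and the $(-2)$-condition on $D^{(1)}_1$ before invoking the $P_2\leftrightarrow P_3$ swap to land on type (2--2), exactly as you envisage.
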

\begin{proof}
We first show that the weighted dual graph of
$T_1+S_1+\cdots+S_5$
coincides with one of those in Figure~\ref{fig:t1}.
Let $E_{ij}$ be the exceptional curve
of the $j$-th blow-up $\varphi_{ij}$ of $\varphi$ over $F_i$
for $i=1,2$.
We use the same symbols to denote the proper transforms
of $E_{ij}$, $\varphi(S_j)$, etc.\ via blow-ups.
Since $S_4=D_1^{(2)}$ and $S_5=D_1^{(3)}$ do not intersect each other,
we may assume that the centers of $\varphi_{11}$ and $\varphi_{12}$ are
$\varphi(S_4)\cap\varphi(S_5)$.

By Lemma~\ref{lem:f1} (i),
$\varphi$ does not perform blow-ups at
$E_{12}\cap\varphi(S_4)$ and $E_{12}\cap\varphi(S_5)$.
This means that $E_{12}$
intersects $S_4=D_1^{(2)}$ and $S_5=D_1^{(3)}$ on $V$.
Thus $E_{12}\nleq D$.
By Lemma~\ref{lem:phi} (i),
$\varphi$ does not perform blow-ups over $E_{12}$.
By Lemma~\ref{lem:s1} (iii),
$E_{11}$ and $F_2$ intersect $S_1$ on $V$.
Let $E$ be an irreducible components of a fiber of $p$
which meets with $D_0^{(1)}$.
It follows from Lemma~\ref{lem:f1} (ii) that $E$ is not a component of $D$
if $E\ne C', E_{11}, F_2$.
Since three irreducible components of $D-D_0^{(1)}$ meet with $D_0^{(1)}$,
the curves $E_{11}$, $F_2$ must be components of $D$.
By Lemma~\ref{lem:phi} (i),
$\varphi$ does not perform blow-ups over $E_{11}\setminus F_1$.

Suppose $F_1\nleq D$.
By Lemma~\ref{lem:phi} (i),
$\varphi$ does not perform further blow-ups over $F_1$.
We have $E_{11}=D^{(1)}_1$.
The weighted dual graph of $T_1+S_1+\dots+S_5$ coincides with (1) in this case.
Suppose $F_1\le D$.
Since $D$ does not contain loops,
$\varphi$ performs blow-ups at two or three points of
the three points $F_1\cap(E_{11}+\varphi(S_2+S_3))$.
If the latter case occurs,
then it follows from Lemma~\ref{lem:phi} (ii) that
$D$ is not connected, which is absurd.
Thus the former case must occur.
It follows from Lemma~\ref{lem:phi} (ii) that
the weighted dual graph of $T_1+S_1+\dots+S_5$
coincides with (2) (resp.\ (3))
if $\varphi$ does not perform (resp.\ performs) a blow-up at $F_1\cap E_{11}$.

We next show that the weighted dual graph of
$T_2+S_1+\cdots+S_5$
coincides with one of those in Figure~\ref{fig:t2}.
By Lemma~\ref{lem:f1} (i),
we may assume that the center of $\varphi_{21}$ (resp.\ $\varphi_{22}$)
is  $\varphi(S_4)\cap F_2$ (resp.\ $\varphi(S_5)\cap F_2$).
Furthermore,
$\varphi$ does not perform blow-ups at
$\varphi(S_4)\cap E_{21}$ and $\varphi(S_5)\cap E_{22}$.
If $\varphi$ does not perform blow-ups over $F_2$ further,
then
the weighted dual graph of $T_2+S_1+\dots+S_5$ coincides with (1).
We have $E_{21},E_{22}\nleq D$ in this case.

Suppose that $\varphi$ performs blow-ups over $F_2$ further.
If $\varphi$ does not perform blow-ups over $E_{21}$,
then $E_{21}$ intersects $S_3=D_2^{(3)}$ and $S_4=D_1^{(2)}$ on $V$.
Thus $E_{21}\nleq D$.
If $\varphi$ performs a blow-up over $E_{21}$,
then it must be done over
$E_{21}\cap\varphi(S_3)$ or $E_{21}\cap F_2$ by Lemma~\ref{lem:phi} (i).
Moreover, we have $E_{21}\le D$.
Since $D$ contains no loops,
it must be done over both of
$E_{21}\cap\varphi(S_3)$ and $E_{21}\cap F_2$.
Similar arguments are valid for $E_{22}$.
If $\varphi$ performs blow-ups over
both of $E_{21}$ and $E_{22}$,
then
the weighted dual graph of $T_2+S_1+\dots+S_5$ coincides with (4).
Otherwise it coincides with (2) or (3).

Now we show that the weighted dual graph of $\sigma^{-1}(P_2)$ coincides with
that in Figure~\ref{fig:d}.
We say that $p$ is of type ($i$-$j$)
if $T_1$ (resp.\ $T_2$) coincides with
($i$) in Figure~\ref{fig:t1}
(resp.\ ($j$) in Figure~\ref{fig:t2}).
We prove that $p$ is of type (2--2) or (2--3).
By Lemma~\ref{lem:b},
$p$ must be of type (1--4), (2--2), (2--3), (3--2) or (3--3).
Suppose $p$ is of type (1--4).
We have $A^{(1)}_{1}=[F_2,\ldots]$ and $B^{(1)}_{1}=[2]$.
By Lemma~\ref{lem:cr2},
$A^{(1)}_{1}=[B^{(1)}_{1},2]^{\ast}=[3]$.
This contradicts the fact that $F_2^2\le-4$ on $V$.
Suppose $p$ is of type (3--2) or (3--3).
In this case, we have $\{D^{(1)}_1,D^{(1)}_2\}=\{E_{11},F_2\}$.
But $E_{11}^2\le-3$ and $F_2^2\le-3$,
which contradicts $(D^{(1)}_1)^2=-2$.
By changing the roles of $P_2$ and $P_3$,
if necessary, we may assume that $p$ is of type (2--2).
It follows that
the weighted dual graph of $\sigma^{-1}(P_2)$ coincides with
that in Figure~\ref{fig:d}.
\end{proof}
We can arrange the order of the blow-ups of $\varphi$ such that
$\varphi=\varphi_0\circ\varphi_{11}\circ\varphi_{12}\circ\varphi_{22}\circ\varphi_{21}$.
Here $\varphi_{ij}$
contracts $T_{ij}+X_{ij}+U_{ij}$ to a point
in Figure~\ref{fig:t1} (2) and Figure~\ref{fig:t2} (2).
The morphism $\varphi_0$
contracts $E_{11}+E_{12}+E_{21}+E_{22}+D_0^{(2)}+D_0^{(3)}$
to points.
We use the same symbols to denote the proper transforms
of $\varphi(C')$, $\varphi(S_j)$, etc.\ via blow-ups.
The morphism $\varphi_0$ performs the blow-ups at
$\varphi(C')\cap\varphi(S_2)$
and
$F_2\cap\varphi(S_2)$.
The morphism $\varphi_{11}$ performs the blow-up at
$F_1\cap\varphi(S_2)$
and
$\varphi_{22}$ performs the blow-up at
$E_{22}\cap\varphi(S_2)$.
Thus $S_2^2\le -4$.
By Lemma~\ref{lem:t},
we have $A^{(2)}_1=[\ldots,-S_2^2]$ and $B^{(2)}_1=[2]$.
By Lemma~\ref{lem:cr2}, we see $A^{(2)}_1=\TW{1}\TA B^{(2)\ast}_1=[3]$.
Hence $S_2^2=-3$, which is a contradiction.
Thus $\varphi(S_4)\cap\varphi(S_5)$ must consist of two points.
We have completed the proof of Theorem~\ref{thm2}.

%
%
%
%
\begin{flushleft}
\small
\textit{E-mail address}:  \texttt{ktono@rimath.saitama-u.ac.jp}
\end{flushleft}
\begin{flushleft}
\scshape
\small
Department of Mathematics,
Graduate School of Science and Engineering,
Saitama University,
Saitama-City, Saitama 338--8570,
Japan.
\end{flushleft}
\end{document}